\documentclass[a4paper,11pt,oneside,headsepline]{scrartcl}

\usepackage{ifdraft}

\usepackage[utf8]{inputenc}
\usepackage[T1]{fontenc}


\usepackage{lmodern}
\usepackage{fourier}

\usepackage{amssymb, amsfonts}

\usepackage{mathrsfs} 
\usepackage{dsfont}
\usepackage[usenames,dvipsnames]{xcolor}
\usepackage{lettrine}
\usepackage{url}


\usepackage{stmaryrd}


\usepackage[english]{babel}


\usepackage{scrpage2}
\usepackage{wrapfig}
\usepackage[compact]{titlesec}
\usepackage{footmisc}
\usepackage{needspace}

\usepackage{amsmath}
\usepackage[thmmarks, amsmath, amsthm]{ntheorem}


\usepackage[style=alphabetic,firstinits,url=false,sortcites=true,maxbibnames=99,backend=bibtex]{biblatex}
\bibliography{bibliography.bib}

\ifdraft{{}}
  {\usepackage[pdftex,
               pdfborder={0 0 0}, colorlinks=true,
               linkcolor=BrickRed, citecolor=ForestGreen, urlcolor=RoyalBlue]{hyperref}}


\ifdraft{\usepackage[colorinlistoftodos]{todonotes}}{}


\usepackage{booktabs}
\usepackage[inline]{enumitem}
\usepackage{float}
\usepackage{multicol}
\usepackage{tikz}
\usetikzlibrary{matrix,arrows,positioning}

\ifdraft{\date{\today}}{\date{}}








\KOMAoptions{DIV=13}


\pagestyle{scrheadings}
\clearscrheadings
\automark[section]{subsection}

\renewcommand{\subsectionmark}[1]{}

\cfoot[--\,\pagemark\,--]{--\,\pagemark\,--}
\lohead{{\small \headertitle}}
\rohead{{\small \headerauthors}}

\newenvironment{plainfootnotes}{
  \deffootnote[0em]{0em}{0em}{}
}{
  \deffootnote[1em]{1.5em}{1em}{\textsuperscript{\thefootnotemark}}
}


\newif\ifsubsectionstylePrefixParagraph
\newif\ifsubsectionstyleRunin

\subsectionstylePrefixParagraphtrue
\subsectionstyleRunintrue


\titleformat{\section}[hang]
{\Large\sffamily\bfseries}
{\thesection\hspace{0.25em}{}}{0.25em}{}


\ifsubsectionstyleRunin
\ifsubsectionstylePrefixParagraph

\titleformat{\subsection}[runin]
{\normalfont\bfseries}
{\S\hspace{.1em}\thesubsection}{0.35em}{}[.\hspace*{.5em}]

\else

\titleformat{\subsection}[runin]
{\normalfont\bfseries}
{\thesubsection\hspace{.1em}|}{0.25em}{}[.\hspace*{.5em}]

\fi
\else
\ifsubsectionstylePrefixParagraph

\titleformat{\subsection}[wrap]
{\normalfont\bfseries\selectfont\filright}
{\S\thesubsection}{.35em}{}
\titlespacing{\subsection}
{16pc}{1.5ex plus .1ex minus .2ex}{1pc}

\else

\titleformat{\subsection}[wrap]
{\normalfont\bfseries\selectfont\filright}
{\thesubsection\hspace{.1em}|}{.25em}{}
\titlespacing{\subsection}
{16pc}{1.5ex plus .1ex minus .2ex}{1pc}

\fi
\fi


\ifsubsectionstylePrefixParagraph

\titleformat{\subsubsection}[runin]
{\normalfont\bfseries}
{\S\hspace{.1em}\thesubsubsection}{0.35em}{}[.]

\else

\titleformat{\subsubsection}[runin]
{\normalfont\bfseries}
{\thesubsubsection\hspace{.1em}|}{0.25em}{}[.]
  
\fi


\titleformat{\paragraph}[runin]
{\normalfont\itshape}
{\S\hspace{.1em}\theparagraph}{0.35em}{}[.]


\AtEveryBibitem{\clearfield{isbn}}
\AtEveryBibitem{\clearlist{language}}
\AtEveryBibitem{\clearfield{pages}}


\newenvironment{enumeratearabic}{
\begin{enumerate}[label=(\arabic*), leftmargin=0pt,labelindent=1.5em,itemindent=!]
}{
\end{enumerate}
}

\newenvironment{enumeratearabic*}{
\begin{enumerate*}[label=(\arabic*)] 
}{
\end{enumerate*}
}

\newenvironment{enumerateroman*}{
\begin{enumerate*}[label=(\roman*)] 
}{
\end{enumerate*}
}


\numberwithin{equation}{section}

\newtheorem{theoremcounter}{theoremcounter}[section]

\theoremstyle{plain}

\newtheorem{lemma}[theoremcounter]{Lemma}
\newtheorem{proposition}[theoremcounter]{Proposition}

\theoremnumbering{Roman}
\newtheorem{maintheoremcounter}{maintheoremcounter}

\newtheorem{maintheorem}[maintheoremcounter]{Theorem}

\theoremstyle{definition}

\newtheorem{definition}[theoremcounter]{Definition}

\theoremstyle{remark}

\newtheorem{remark}[theoremcounter]{Remark}

\newtheorem*{mainremark}{Remark}

\newtheorem*{remarkcomputation}{Computation}









\ifdraft{
 \newcommand{\texpdf}[2]{#1}
}{
 \newcommand{\texpdf}[2]{\texorpdfstring{#1}{#2}}
}



\newcommand{\tx}{\ensuremath{\text}}

\newcommand{\tbf}{\bfseries}


\newcommand{\thdash}{\nbd th}


\newcommand{\nbd}{\nobreakdash-\hspace{0pt}}



\newcommand{\bboard}{\ensuremath{\mathbb}}

\renewcommand{\frak}{\ensuremath{\mathfrak}}


\newcommand{\bbM}{\ensuremath{\bboard M}}

\newcommand{\bbP}{\ensuremath{\bboard P}}



\newcommand{\frakg}{\ensuremath{\frak{g}}}

\newcommand{\frakk}{\ensuremath{\frak{k}}}

\newcommand{\frakm}{\ensuremath{\frak{m}}}



\newcommand{\rmt}{\ensuremath{\mathrm{t}}}


\newcommand{\rmA}{\ensuremath{\mathrm{A}}}

\newcommand{\rmF}{\ensuremath{\mathrm{F}}}
\newcommand{\rmG}{\ensuremath{\mathrm{G}}}

\newcommand{\rmL}{\ensuremath{\mathrm{L}}}
\newcommand{\rmM}{\ensuremath{\mathrm{M}}}

\newcommand{\rmP}{\ensuremath{\mathrm{P}}}

\newcommand{\rmR}{\ensuremath{\mathrm{R}}}



\newcommand{\wtd}{\widetilde}
\newcommand{\ov}{\overline}



\newcommand*{\longhookrightarrow}{\ensuremath{\lhook\joinrel\relbar\joinrel\rightarrow}}

\newcommand{\ra}{\ensuremath{\rightarrow}}
\newcommand{\hra}{\ensuremath{\hookrightarrow}}

\newcommand{\lhra}{\ensuremath{\longhookrightarrow}}




\newcommand{\ZZ}{\ensuremath{\mathbb{Z}}}

\newcommand{\RR}{\ensuremath{\mathbb{R}}}
\newcommand{\CC}{\ensuremath{\mathbb{C}}}

\renewcommand{\Re}{\ensuremath{\mathrm{Re}}}
\renewcommand{\Im}{\ensuremath{\mathrm{Im}}}



\newcommand{\sgn}{\ensuremath{\mathrm{sgn}}}

\newcommand{\Hom}{\ensuremath{\mathop{\mathrm{Hom}}}}



\newenvironment{psmatrix}{\left(\begin{smallmatrix}}{\end{smallmatrix}\right)}


\newcommand{\Mat}[1]{\ensuremath{\mathrm{Mat}_{#1}}}


\newcommand{\GL}[1]{\ensuremath{\mathrm{GL}_{#1}}}

\newcommand{\SL}[1]{\ensuremath{\mathrm{SL}_{#1}}}

\newcommand{\Sp}[1]{\ensuremath{\mathrm{Sp}_{#1}}}

\newcommand{\U}[1]{\ensuremath{\mathrm{U}_{#1}}}

\newcommand{\T}{\ensuremath{\rmt}}
\newcommand{\rT}{\ensuremath{\,{}^\T\!}}

\newcommand{\tr}{\ensuremath{\mathrm{tr}}}




\renewcommand{\det}{\ensuremath{\mathrm{det}}}
\newcommand{\sym}{\ensuremath{\mathrm{sym}}}







































\newcommand{\HS}{\mathbb{H}}




























\newcommand{\gKmodule}{$(\frakg,K)$\nbd module}
\newcommand{\gKmodules}{$(\frakg,K)$\nbd modules}

\newcommand{\sk}{\ensuremath{\mathrm{sk}}}

\newcommand{\ppartial}[1]{\ensuremath{\partial_{#1}}}


\newcommand{\headertitle}{{\normalfont%
  Siegel-Poincar\'e Series
}}
\newcommand{\headerauthors}{
  K.~Bringmann,
  O.~K.~Richter,
  M.~Westerholt-Raum
}

\begin{document}

\begin{plainfootnotes}
\begin{flushleft}
{\fontfamily{lms}\sffamily
  \hspace{20pt}{\huge%
  Almost holomorphic Poincar\'e series corresponding to}
  \\\hspace{20pt}{\huge%
  products of harmonic Siegel-Maass forms
  }
}
\\[.6em]\hspace{20pt}{\large%
  Kathrin Bringmann%
  \footnote{The research of the first author was supported by the Alfried Krupp Prize for Young University Teachers of the Krupp foundation and the research leading to these results has received funding from the European Research Council under the European Union's Seventh Framework Programme (FP/2007-2013) / ERC Grant agreement n.\ 335220 - AQSER.},
  Olav~K.~Richter
  \footnote{The second author was partially supported by Simons Foundation Grant \#200765.}, and
  Martin Westerholt-Raum%
  \footnote{The third author was partially supported by Vetenskapsr\aa det Grant~2015-04139.}
}
\\[1.2em]
\end{flushleft}
\end{plainfootnotes}

\thispagestyle{scrplain}


{\small
\noindent
{\tbf Abstract:}
We investigate Poincar\'e series, where we average products of terms of Fourier series of real-analytic Siegel modular forms. There are some (trivial) special cases for which the products of terms of Fourier series of elliptic modular forms and harmonic Maass forms are almost holomorphic, in which case the corresponding Poincar\'e series are almost holomorphic as well. In general this is not the case. The main point of this paper is the study of Siegel-Poincar\'e series of degree~$2$ attached to products of terms of Fourier series of harmonic Siegel-Maass forms and holomorphic Siegel modular forms. We establish conditions on the convergence and nonvanishing of such Siegel-Poincar\'e series.  We surprisingly discover that these Poincar\'e series are almost holomorphic Siegel modular forms, although the product of terms of Fourier series of harmonic Siegel-Maass forms and holomorphic Siegel modular forms (in contrast to the elliptic case) is not almost holomorphic.  Our proof employs tools from representation theory. In particular, we determine some constituents of the tensor product of Harish-Chandra modules with walls.
\\[.35em]
\textsf{\textbf{%
  almost holomorphic modular forms%
}}
\hspace{0.3em}{\tiny$\blacksquare$}\hspace{0.3em}%
\textsf{\textbf{%
  Siegel modular forms%
}}
\hspace{0.3em}{\tiny$\blacksquare$}\hspace{0.3em}%
\textsf{\textbf{%
  Harish-Chandra modules%
}}
\\[0.15em]
\noindent
\textsf{\textbf{%
  MSC Primary:
  11F46%
}}
\hspace{0.3em}{\tiny$\blacksquare$}\hspace{0.3em}%
\textsf{\textbf{%
  MSC Secondary:
  11F30, 11F37, 11F70
}}
}


\vspace{1.5em}


\Needspace*{4em}
\addcontentsline{toc}{section}{Introduction}
\markright{Introduction}
\lettrine[lines=2,nindent=.2em]{\tbf M}{odular} forms have a rich history with many beautiful applications in different sciences.  Restricting our attention to cusp forms, we can distinguish two classes of modular forms, which behave drastically differently: Maass cusp forms and almost holomorphic modular forms.  Almost holomorphic forms modular forms were introduced independently by Shimura~\cite{shimura-1987} and Kaneko and Zagier~\cite{kaneko-zagier-1995}. 
Shimura initiated their study, because of their special arithmetic properties. Kaneko and Zagier's paper was stimulated by Dijkgraaf~\cite{dijkgraaf-1995}, who conjectured a relation between almost holomorphic modular forms and numbers of topologically inequivalent branched covers of an elliptic curve. Research on the latter aspect has been particularly successful, establishing connections to Kac-Moody algebras and related representation theoretic objects~\cite{bloch-okounkov-2000,eskin-okounkov-pandharipande-2008}.

Poincar\'e series play a major role in the theory of automorphic forms (for example, see \cite{fay-1977,bringmann-ono-2006,duke-imamoglu-toth-2011} among many others). It is not difficult to construct almost holomorphic Poincar\'e series by observing that almost holomorphic modular forms vanish under a power of the lowering operator~$\rmL: = -2 i y^2 \ppartial{\ov\tau}$, where throughout $\tau=x+iy\in\HS$.  Specifically, the elliptic Poincar\'e series $\sum_\gamma (y^{-d} e^{2 \pi i\, n\tau}) \big|_k\, \gamma$ converges for  $n, d \in \ZZ$, $n>0$, $k>2$, $0 \le d < \frac{k}{2} - 1$, and it is almost holomorphic, since $\rmL$ is equivariant with respect to the usual slash action $|_k$ and $\rmL^{d+1} (y^{-d} e^{2 \pi i\, n\tau})=0$.  Slightly more general, if $s\in\CC$, then Selberg's~\cite{selberg-1965} Poincar\'e series $\sum_\gamma \big( ( y^s ) \cdot ( e^{2 \pi i\, n \tau} ) \big) \big|_k\, \gamma$ corresponds to a product of terms of Fourier series.  The factor $y^s$ is the $0$\thdash\ term of the Fourier series of a (weight $0$) Eisenstein series, and the factor $e^{2 \pi i\, n \tau}$ is a term of Fourier series of a (weight~$k$) holomorphic modular form. In general, if the product of two terms of Fourier series is not almost holomorphic, then the associated Poincar\'e series is not almost holomorphic either, provided that it is nonzero. We discuss two examples in more detail in Section~\ref{ssec:elliptic:spectral-decomposition}.

Siegel modular forms impact many different areas of mathematics: Algebraic and arithmetic geometry, invariant theory, representation theory, quantum theory, and conformal field theory, for example. Almost holomorphic Siegel modular forms were recently classified in~\cite{klemm-poretschkin-schimannek-raum-2015,pitale-saha-schmidt-2015}, and they play an important role in the context of mirror symmetry in~\cite{klemm-poretschkin-schimannek-raum-2015}.

The purpose of this paper is to construct almost holomorphic Siegel-Poincar\'e series of degree~$2$, where we average products of terms of Fourier series of (harmonic) Siegel-Maass forms, but where these products are not almost holomorphic themselves. More specifically, in~\eqref{eq:siegel:poincare-series} we define the Siegel-Poincar\'e series
\begin{gather*}
  \bbP^{(2)}_{k,\ell;T,T'} (Z)
=
  \sum_M \Big( \Psi_k(T;Z) \cdot \Phi_\ell(T';Z) \Big) \big|_{k+\ell}\, M
\text{,}
\end{gather*}
where $k$, $\ell$ are positive even integers, $T$, $T'$ are positive definite and symmetric $2\times 2$ matrices, and where $\Psi_k(T;Z)$ and $\Phi_\ell(T';Z)$ are the $T$\thdash\ and $T'$\thdash\ terms of Fourier series of a weight $k$ harmonic Siegel-Maass form (as in~\cite{bringmann-raum-richter-2011,raum-2012d}) and weight~$\ell$ holomorphic Siegel modular form, respectively.  We apply an estimate of~\cite{shimura-1982} to prove that $\bbP^{(2)}_{k,\ell;T,T'}$ converges for all $\ell$ large enough. Moreover, we employ a result of~\cite{kowalski-saha-tsimerman-2011} to show that $\bbP^{(2)}_{k,\ell;T,T'}$ does not vanish identically for all $\ell$ large enough.  The following theorem is our main result.
\begin{maintheorem}
\label{thm:maintheorem}
Assume the generalized Ramanujan conjecture for $\GL{4}$. The function $\Psi_k(T;Z) \cdot \Phi_\ell(T';Z)$ is \emph{not} almost holomorphic. If $\ell \ge 6 + 2b - k$, where $b > 0$ is defined in \eqref{eq:shimuras-A-and-B}, then $\bbP^{(2)}_{k,\ell;T,T'}$ converges and is almost holomorphic.
\end{maintheorem}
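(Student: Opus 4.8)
The plan is to prove the three assertions separately, the first two by Fourier--Whittaker analysis and the third representation-theoretically. For the non-almost-holomorphy, I would recall that each term $\Psi_k(T;Z)$ of a harmonic Siegel--Maass form factors as a confluent hypergeometric (Whittaker) function in the matrix variable $Y$ times $e^{2\pi i \tr(TX)}$, and that this Whittaker factor is transcendental in the entries of $Y$ rather than polynomial in those of $Y^{-1}$. Since $\Phi_\ell(T';Z)$ contributes only the holomorphic exponential $c(T')\,e^{2\pi i \tr(T'Z)}$, the product inherits the non-polynomial $Y$-dependence of $\Psi_k$; comparing with the definition of an almost holomorphic form — a polynomial in the entries of $Y^{-1}$ with holomorphic coefficients — this forces the product to be non-almost-holomorphic. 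The generalized Ramanujan conjecture for $\GL{4}$ enters to control the Fourier coefficients in play, ruling out the degenerate cancellations that could otherwise spoil this conclusion (and, together with \cite{kowalski-saha-tsimerman-2011}, securing the nonvanishing of $\bbP^{(2)}_{k,\ell;T,T'}$).

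For convergence I would majorize the seed termwise. The holomorphic factor is bounded by a constant times $e^{-2\pi \tr(T'Y)}$, and Shimura's estimate \cite{shimura-1982} bounds the Whittaker factor of $\Psi_k$ by a fixed power $(\det Y)^{b}$ with $b$ as in \eqref{eq:shimuras-A-and-B}. Substituting these bounds, the sum of termwise absolute values is dominated by a degree-$2$ Siegel Eisenstein majorant whose convergence, after incorporating the growth shift $2b$ coming from the Whittaker factor, requires precisely $k+\ell \ge 6+2b$; this is the stated hypothesis, under which $\bbP^{(2)}_{k,\ell;T,T'}$ converges absolutely and locally uniformly.

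The substance lies in the third assertion, which I would establish through Harish-Chandra modules. First I would record the spectral characterization of almost holomorphy: a moderate-growth automorphic form on $\Sp{4}$ is almost holomorphic exactly when the lowering operators spanning $\frakp^-$ act locally nilpotently on it, i.e. when the $(\frakg,K)$-module it generates is of lowest-weight type. Because the slash action $|_{k+\ell}M$ commutes with the $\frakg$-action, Poincaré summation is $\frakg$-equivariant, so the module generated by $\bbP^{(2)}_{k,\ell;T,T'}$ is a subquotient of the module $W$ generated by the seed $\Psi_k(T;Z)\cdot\Phi_\ell(T';Z)$. Now $\Phi_\ell$ is holomorphic, hence a lowest-weight vector killed by $\frakp^-$, and generates a holomorphic discrete series $M_2$; while $\Psi_k$ generates the Harish-Chandra module $M_1$ of the harmonic Siegel--Maass form, whose harmonicity forces a singular infinitesimal character — a module ``with walls''. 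Since $\frakp^-$ and $\frakp^+$ act by derivations on the product, $W$ is a subquotient of the tensor product $M_1\otimes M_2$, and I would reduce the theorem to identifying which constituents of $M_1\otimes M_2$ can support a convergent, nonzero Poincaré series.

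The main obstacle is this last step: decomposing $M_1\otimes M_2$ into indecomposable constituents while $M_1$ sits on a wall. The usual translation-functor calculus presupposes a regular infinitesimal character and does not apply, so I would instead determine the constituents by hand, tracking $K$-types together with Casimir eigenvalues, and single out the lowest-weight constituent that governs almost holomorphic forms. It then remains to show that Poincaré summation projects onto this constituent: the non-lowest-weight constituents should either violate the moderate-growth bound of the second part — once $\frakp^-$ has lowered the weight past the threshold $6+2b$ — or have identically vanishing Poincaré series, leaving $\bbP^{(2)}_{k,\ell;T,T'}$ annihilated by a power of $\frakp^-$ and therefore almost holomorphic. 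Proving this tensor-product decomposition with walls, and the surjectivity of the Poincaré projection onto the almost-holomorphic constituent, is where the genuine work resides.
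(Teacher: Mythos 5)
Your skeleton for the almost-holomorphy part --- embedding $\varpi\big(\bbP^{(2)}_{k,\ell;T,T'}\big)$ into $\varpi\big(\Psi_k(T;\,\cdot\,)\big)\otimes\varpi\big(\Phi_\ell(T';\,\cdot\,)\big)$ via equivariance of Poincar\'e summation and the Leibniz rule --- is indeed the paper's (Lemmas~\ref{la:poincare-series-harish-chandra-module} and~\ref{la:product-harish-chandra-module}), but the way you propose to finish has a genuine gap. You plan to decompose the tensor product ``by hand,'' tracking $K$\nbd types and Casimir eigenvalues, and then argue that Poincar\'e summation projects onto the lowest-weight constituent. That tensor product is not an admissible module: its $K$\nbd types occur with infinite multiplicity (the paper makes exactly this point in the elliptic ``inconclusive cases'' paragraph), so a constituent-by-constituent decomposition is not a workable plan, and your fallback dichotomy --- non-lowest-weight constituents either violate moderate growth or have vanishing Poincar\'e series --- comes with no proof mechanism. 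The paper avoids any decomposition: it extracts a single soft invariant, namely that a vertical wall in $\varpi(\Psi_k)$ (direction $\rightarrow$) tensored with a horizontal wall in $\varpi(\Phi_\ell)$ (direction $\uparrow$) yields a vertical wall, by Clebsch-Gordan combinatorics for $K\cong\U{2}(\RR)$ (Proposition~\ref{prop:tensor-product-with-walls}); it then uses that $\bbP^{(2)}_{k,\ell;T,T'}$ is a scalar-valued \emph{cusp} form, so its \gKmodule\ is a direct sum of irreducibles, each occurring in the cuspidal automorphic spectrum and each containing a scalar $K$\nbd type; and it concludes with a classification (Proposition~\ref{prop:irreducible-unitary-walls-scalar-K-type}): an irreducible cuspidal automorphic representation whose archimedean component has a wall and a scalar $K$\nbd type is a holomorphic or antiholomorphic (limit of) discrete series. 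That classification is where the generalized Ramanujan conjecture for $\GL{4}$ is actually needed --- combined with Arthur's endoscopic classification~\cite{arthur-2013} to control the non-tempered cuspidal spectrum and Mui\'c's $K$\nbd type computations~\cite{muic-2009}. Your proposal instead assigns GRC to ``ruling out degenerate cancellations'' in Fourier coefficients and to the nonvanishing via~\cite{kowalski-saha-tsimerman-2011}; neither uses it, and the step that does need it is absent from your plan.

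The first assertion also has a gap. Almost holomorphy is defined here by annihilation under a tensor power of the lowering operator~$\rmL$, and for a non-modular seed function you cannot invoke the ``polynomial in the entries of $Y^{-1}$'' characterization without proof; asserting that the Whittaker factor of $\Psi_k$ is ``transcendental in $Y$'' is an expectation, not an argument. The paper's proof (Proposition~\ref{prop:siegel-fourier-coefficient-is-not-almost-holomorphic}) is representation-theoretic: since $\rmL\big(\Phi_\ell(T';\,\cdot\,)\big)=0$, one has $\rmL^d(\Psi_k\Phi_\ell)=\rmL^d(\Psi_k)\cdot\Phi_\ell$; after reducing by $\GL{2}$\nbd equivariance to a Fourier term of an Eisenstein series, one embeds the Harish-Chandra module generated by that Eisenstein series into a generalized Whittaker (Bessel) model, uses~\cite{lee-1996} to see that the $K$\nbd types $(k+2d,k)$ occur there with multiplicity one, and checks that the projection of $\rmL^d(\Psi_k)$ to $\det^k\sym^{2d}$ generates precisely that $K$\nbd type, hence is nonzero for every~$d$. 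Your convergence argument, by contrast, is essentially the paper's, up to a sign: Shimura's estimate gives $\det(TY)\,h_{k+1,1}(T;Y)\ll\big(1+\det(Y)^{-b}\big)e^{-\pi\tr(TY)}$ --- the exponent is $-b$, not $b$, and the exponential decay must be retained --- after which the factor $\det(Y)^{-b}$ shifts the effective weight to $k+\ell-2b$ and yields the stated threshold.
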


\begin{mainremark}
\begin{enumeratearabic}
\item
Poincar\'e series are the starting point for Kuznetsov-type~\cite{kuznecov-1981} trace formulas. It would be interesting to determine the Fourier series coefficients and the spectral decomposition of $\bbP^{(2)}_{k,\ell;T,T'}$ to discover a novel Kuznetsov-type trace formula for Siegel modular forms.

\item
It is possible to define $\bbP^{(2)}_{k,\ell;T,T'}$ for indefinite $T$. One can show that these Poincar\'e series converge for sufficiently large $\ell$ and that they are almost holomorphic. However, unfolding the Petersson scalar product against an arbitrary almost holomorphic Siegel modular form yields that they vanish identically.
\end{enumeratearabic}
\end{mainremark}

The paper is organized as follows.  In Section~\ref{sec:elliptic}, we illustrate how almost holomorphic Poincar\'e series arise in the setting of elliptic modular forms. In Section~\ref{sec:siegel}, we review real-analytic Siegel modular forms.  In particular, we recall the notions of almost holomorphic Siegel modular forms and harmonic Siegel-Maass forms.  In Section~\ref{sec:poincare-series}, we define the Siegel-Poincar\'e series $\bbP^{(2)}_{k,\ell;T,T'}$, and we determine conditions on its convergence and nonvanishing. In Section~\ref{sec:harish-chandra-modules}, we prove Theorem~\ref{thm:maintheorem} using tools from real representation theory: The Poincar\'e series $\bbP^{(2)}_{k,\ell;T,T'}$ yields a cuspidal automorphic representation for $\mathrm{PGSp}_n$. The \gKmodule\ attached to its component at the infinite place embeds into the tensor product of two Harish-Chandra modules generated by $\Psi_k$ and $\Phi_\ell$. This puts severe restrictions on \gKmodules\ arising in our context. Specifically, we employ Mui\'c's~\cite{muic-2009} study of decompositions of generalized principal series to control Harish-Chandra parameters of \gKmodules\ that have ``walls'' in their $K$\nbd type support. In addition, $\bbP^{(2)}_{k,\ell;T,T'}$ gives rise to a scalar $K$\nbd type.  Consequently, we can invoke Arthur's~\cite{arthur-2013} endoscopic classification of representations of~$\Sp{2}(\RR)$ to narrow down possibilities to holomorphic (limits of) discrete series.


\section{Poincar\'e series for elliptic modular forms}
\label{sec:elliptic}

We consider cuspidal Poincar\'e series in the case of elliptic modular forms to demonstrate possible phenomena, and to explain available tools to investigate such series. From the introduction, recall the almost holomorphic Poincar\'e series attached to the function
\begin{gather}
\label{eq:elliptic:almost-holomorphic-fourier-coefficients}
  \phi_{k[d]}(n; \tau)
:=
  y^{-d} e^{2 \pi i\, n \tau}
\,\tx{,}\quad
  n > 0
\tx{.}
\end{gather}
Note that $\phi_{k[d]}$ does not depend on~$k$.  Nevertheless, we also include it in the notation to indicate that it is a typical term of a weight~$k$ almost holomorphic modular form. If $d=0$, then we write $\phi_{k}(n; \tau) := \phi_{k[0]}(n; \tau)$.

To define elliptic Poincar\'e series, set $\Gamma^{(1)}_{\infty} := \big\{ \pm \begin{psmatrix}1 & b \\ 0 & 1\end{psmatrix}\tx{,}\, b\in\ZZ \big\}$, and recall the elliptic slash action $(f |_k\, \gamma )(\tau) := (c \tau + d)^{-k} f(\frac{a \tau + b}{c \tau + d})$ for $f :\, \HS \ra \CC$, $k \in \ZZ$, and $\gamma = \begin{psmatrix} a & b \\ c & d \end{psmatrix} \in \SL{2}(\ZZ)$. Products of almost holomorphic functions are almost holomorphic. Hence it is trivial that the Poincar\'e series
\begin{gather}
\label{eq:elliptic:almost-holomorphic-poincare-series}
  \rmP^{(1)}_{k[d],\ell;n,m}(\tau)
:=
  \sum_{\gamma \in \Gamma^{(1)}_\infty \backslash \SL{2}(\ZZ)}
  \big( \phi_{k[d]}(n; \tau) \, \phi_\ell(m; \tau) \big) \big|_{k+\ell}\, \gamma
\end{gather}
is almost holomorphic, provided that it converges, which is the case if $k + \ell > 2 + 2d$. In the next two sections, we will demonstrate that analogous Poincar\'e series are, in general, not almost holomorphic.

\subsection{Poincar\'e series that are not almost holomorphic}

Consider a typical term of the nonholomorphic part of a weight~$k$ harmonic weak Maass form:
\begin{gather}
\label{eq:elliptic:harmonic-fourier-coefficients}
  \wtd\psi_{k}(n; \tau)
:=
  \Gamma(1-k, 4 \pi |n| y)\, e^{2 \pi i\, n \tau}
\,\tx{,}\quad
  k \le 0,\, n < 0
\tx{,}
\end{gather}
where $\Gamma$ is the usual incomplete gamma function. Note that $\wtd\psi_k(n;\,\cdot\,)$ decays rapidly toward infinity, but the Poincar\'e series $\sum_\gamma \wtd\psi_k(n;\tau) \big|_k \gamma$ does not converge, due to its behavior as $y \ra 0$. For the remainder of this section, let $\ell \in \ZZ$ such that $k + \ell > 2$, in which case the Poincar\'e series
\begin{gather}
\label{eq:elliptic:harmonic-poincare-series}
  \wtd\bbP^{(1)}_{k,\ell;n,m}(\tau)
:=
  \sum_{\gamma \in \Gamma^{(1)}_\infty \backslash \SL{2}(\ZZ)}
  \big( \wtd\psi_{k}(n; \tau) \, \phi_\ell(m; \tau) \big) \big|_{k+\ell}\, \gamma
\end{gather}
converges.

Our main tool in this paper---see  Section~\ref{sec:harish-chandra-modules}---is the theory of Harish-Chandra modules (i.e., real representation theory). We will apply it to the case of elliptic modular forms in Section~\ref{ssec:elliptic:harish-chandra}. Guided by the emphasis of real representation theory on linear differential operators, we consider the lowering  operator~$\rmL$ instead of Bruinier and Funke's~\cite{bruinier-funke-2004} $\xi$\nbd operator. The $\xi$\nbd operator and $\rmL$ map smooth functions on~$\HS$ to smooth functions on~$\HS$, and are defined by
\begin{gather*}
  \xi_k(f)
:=
  2 i y^k \ov{ \ppartial{\ov\tau}\, f }
\quad\tx{and}\quad
  \rmL
:=
  \rmL_k
:=
  -2 i y^2 \ppartial{\ov\tau}
\,\tx{,}
\end{gather*}
where $\ppartial{\ov\tau}\,f = \partial f \big\slash \partial \ov\tau$ and $\ppartial{\tau}\,f = \partial f \big\slash \partial \tau$\vspace{-.3em}. The space~$\bbM^{(1)}_k$ of harmonic Maass forms is mapped by $\xi_k$ to $\rmM^{(1)}_{2-k}$, the space of holomorphic modular forms. Equivalently, it is mapped by $\rmL_k$ to $y^{2-k} \ov{\rmM}^{(1)}_{2-k}$, which is the space of antiholomorphic modular forms of antiholomorphic weight~$2-k$ normalized by $y^{2-k}$ to holomorphic weight~$k-2$. Terms of Fourier series are related by the formula $\rmL_k \big( \wtd\psi_k(n; \tau) \big) = (4 \pi |n|)^{1-k}\, \wtd\phi_{k-2}(-n;\tau)$ with
\begin{gather}
\label{eq:elliptic:antiholomorphic-fourier-coefficients}
  \wtd\phi_k(n; \tau)
:=
  y^{-k}\ov{\phi_{-k}(n; \tau)} 
=
  y^{-k} e^{- 2 \pi i\, n \ov{\tau}}
\,\tx{,}\quad
  k < 0,\, n < 0
\tx{.}
\end{gather}
The image of~$\wtd\bbP^{(1)}_{k,\ell;n,m}$ under $\rmL_k$ is therefore a scalar multiple of
\begin{gather}
\label{eq:elliptic:antiholomorphic-poincare-series}
  \wtd{\rmP}^{(1)}_{k-2,\ell;n,m}(\tau)
:=
  \sum_{\gamma \in \Gamma^{(1)}_\infty \backslash \SL{2}(\ZZ)}
  \big( \wtd\phi_{k-2}(n; \tau) \, \phi_\ell(m; \tau) \big) \big|_{k-2+\ell}\, \gamma
\tx{.}
\end{gather}

For simplicity, we focus on $|m| > |n|$.
%
In the next section, we find that \emph{neither}~$\wtd{\rmP}^{(1)}_{k,\ell;n,m}$ nor~$\wtd{\bbP}^{(1)}_{k,\ell;n,m}$ is almost holomorphic.

\subsection{Spectral decomposition of Poincar\'e series}
\label{ssec:elliptic:spectral-decomposition}

It is an important and nontrivial task to find the spectral decomposition of an automorphic form. The spectrum of the weight~$k$ hyperbolic Laplace operator on $L^2(\SL{2}(\ZZ) \backslash \HS,\, |_k)$ consists of four different contributions:
\begin{enumeratearabic*}
\item
Eisenstein series~$E_{k,s}$ with spectral parameter $s = 1/2 + it$, $t \in \RR$.

\item
The residual spectrum, which arises from unary theta series.

\item
Almost holomorphic and almost antiholomorphic cusp forms.

\item
Proper Maass cusp forms.
\end{enumeratearabic*}
The Poincar\'e series that we consider are all cuspidal, and so Eisenstein series do not occur in their spectral decomposition. Furthermore, the residual spectrum occurs only for half-integral weight, and therefore it does not contribute to the weights that we treat. Consequently, Parseval's equation implies that if $f$ is one of the Poincar\'e series, then
\begin{gather*}
  f
=
  \sum_j \langle g_j, f \rangle g_j
  +
  \sum_j \langle u_j, f \rangle u_j
\end{gather*}
where $g_j$ runs through a (finite) complete orthonormal set of almost holomorphic and antiholomorphic modular forms, and $u_j$ runs through an (infinite) complete orthonormal set of proper Maass cusp forms.

We now show that $\wtd\rmP^{(1)}_{k,\ell;n,m}$ in~\eqref{eq:elliptic:antiholomorphic-poincare-series} and $\wtd\bbP^{(1)}_{k,\ell;n,m}$ in~\eqref{eq:elliptic:harmonic-poincare-series} are not almost holomorphic. More precisely, we show that both have spectral expansions with infinite support. The relation
\begin{gather*}
  \rmL_k \Big( \wtd{\bbP}^{(1)}_{k,\ell;n,m}(\tau) \Big)
=
  (4 \pi |n|)^{1-k}\;
  \wtd{\rmP}^{(1)}_{k-2,\ell;n,m}(\tau) 
\end{gather*}
allows us to examine only~$\wtd\rmP^{(1)}_{k,\ell;n,m}$.

To exhibit the spectral decomposition, suppose that $u$ is a Maass cusp form with spectral parameter~$s$ with $\Re(s) > \frac{1}{2}$. Without loss of generality, we can raise its weight to weight~$k+\ell$, such that its spectral parameter with respect to the weight~$k+\ell$ Laplace operator is~$s - (k+\ell)/2$. The Fourier series expansion of~$u$ has the form
\begin{gather*}
  u(\tau)
=
  \sum_{n \in \ZZ \setminus \{0\}}\!\!
  c(u;\, n)\,
  (4 \pi |n| y)^{-\frac{k+\ell}{2}}
  W_{\sgn(n) \frac{k+\ell}{2},\, s-\frac{1}{2}}(4 \pi |n| y)
  e^{2 \pi i\, n x}
\,\tx{,}
\end{gather*}
where $W_{\mu,\nu}$ denotes the usual $W$-Whittaker function. We unfold the Petersson scalar product of the Poincar\'e series $\wtd\rmP^{(1)}_{k,\ell; n,m}$ against $u$, and obtain by 7.621.3 of~\cite{gradshteyn-ryzhik-2007}
that $\langle u,\, \wtd\rmP^{(1)}_{k,\ell;n,m} \rangle$ equals
\begin{multline*}
  c(u;\, m - n)
  \int_0^\infty
  y^{-k} e^{-2\pi (m-n) y}\,
  (4 \pi\, (m-n) y)^{-\frac{k+\ell}{2}} W_{\frac{k+\ell}{2},\, s-\frac{1}{2}}(4 \pi (m-n) y)
  \;\frac{d\!y}{y^{2-(k+\ell)}}
\\
=
  c(u;\, m - n)\;
  \big( 4 \pi (m - n) \big)^{1 - l}\,
  \frac{\Gamma\big(\frac{\ell-k}{2} + s - 1\big) \Gamma\big(\frac{\ell-k}{2} - s\big)}{\Gamma(-k)}\;
  {}_2 \rmF_1\big( \tfrac{\ell-k}{2} + s - 1, s - \tfrac{k+\ell}{2},\, -k;\,0 \big)
\tx{.}
\end{multline*}
The hypergeometric series evaluated at~$0$ equals~$1$. The gamma-factors do not vanish and all Maass  cusp forms with $c(u;\,m - n) \ne 0$ contribute to the spectral expansion of~$f$. By Theorem~1.2 of~\cite{matz-templier-2015}, there are infinitely many such Maass cusp forms.

\subsection{Almost holomorphic Poincar\'e series}
\label{ssec:elliptic:almost-holomorphic}

In this section, we suggest another Poincar\'e series that is almost holomorphic. In light of what we discuss in Section~\ref{ssec:elliptic:harish-chandra}, it is natural to replace $\wtd\psi_k$ by
\begin{gather}
\label{eq:elliptic:anti-harmonic-fourier-coefficients}
  \psi_{k}(n; \tau)
:=
  y^{-k}\, \Gamma(1+k, 4 \pi n y)\, e^{2 \pi i\, n \ov{\tau}}
\,\tx{,}\quad
  k \ge 0,\, n > 0
\tx{.}
\end{gather}
It is a term of the Fourier series of an ``anti-harmonic'' Maass form in $y^{-k} \ov\bbM^{(1)}_{-k}$, which is a space of functions that are mapped by the raising operator
\begin{gather}
  \rmR_k
:=
  2 i \ppartial{\tau} + k y^{-1}
\end{gather}
to holomorphic modular forms of weight~$k+2$.

Define the Poincar\'e series
\begin{gather}
\label{eq:elliptic:anti-harmonic-poincare-series}
 \bbP^{(1)}_{k,\ell;n,m}(\tau)
:=
  \sum_{\gamma \in \Gamma^{(1)}_\infty \backslash \SL{2}(\ZZ)}
  \big( \psi_{k}(n; \tau) \, \phi_\ell(m; \tau) \big) \big|_{k+\ell}\, \gamma
\tx{,} 
\end{gather}
which converges if $\ell - k > 2$. Observe that
\begin{gather*}
  \psi_{k}(n; \tau) \, \phi_\ell(m; \tau)
=
  y^{-k} \Gamma(1+k, 4 \pi n y) e^{2 \pi i\, n \ov{\tau}}
  e^{2 \pi i\, m \tau}
=
  p(y^{-1}) e^{2 \pi i\, (n + m) \tau}
\,\tx{,}
\end{gather*}
where $p$ is a polynomial of degree~$k$. Applying the lowering operator $k+1$ times annihilates this product, i.e., $\bbP^{(1)}_{k,\ell;n,m}$ is almost holomorphic.

Finally, it is easy to see that $\rmP^{(1)}_{k[d],\ell; n, m}$ (defined in~\eqref{eq:elliptic:almost-holomorphic-poincare-series}) is in the kernel of $L^{d+1}$, i.e., it is also almost holomorphic.

\subsection{Harish-Chandra modules}
\label{ssec:elliptic:harish-chandra}

The spectral decomposition of an automorphic form implies a decomposition of the associated Harish-Chandra module. Vice versa, one can deduce from the decomposition of the Harish-Chandra module attached to an automorphic form which parts of the spectrum contribute to its spectral expansion. In particular, it is possible to infer from the Harish-Chandra module alone whether an automorphic form is almost holomorphic. In this section, we give some details and examples. A precise statement in the case of automorphic forms for $\Sp{2}(\RR)$ is presented in Section~\ref{sec:harish-chandra-modules}.

We reconsider the spectral decomposition of the Poincar\'e series~$\rmP^{(1)}_{k[d],\ell; n, m}$ and~$\bbP^{(1)}_{k,\ell; n,n'}$ in light of Harish-Chandra modules. The Poincar\'e series in~\eqref{eq:elliptic:harmonic-poincare-series} cannot be completely analyzed and their \gKmodules\ are not Harish-Chandra modules. An excellent treatment of Harish-Chandra modules can, for example, be found in~\cite{wallach-1988,knapp-2001}. To accommodate the classically inclined reader, we suggest a rather simple schematic way of thinking of Harish-Chandra modules in terms of lowering and raising operators; but for simplicity we suppress details on the correspondence of Harish-Chandra modules and the $\CC[\rmL,\rmR]$-modules that we employ in this section. We will be more precise in Section~\ref{sec:harish-chandra-modules}, when we discuss the case of $\Sp{2}(\RR)$.

Given an even weight~$k$ and a function~$f$, we study the action of $\CC[\rmL, \rmR]$ where $\rmL$ and $\rmR$ are abstract lowering operators on the pair $(f,k)$. The action is precisely given by $\rmL\, (f,k) = (\rmL_k\, (f), k-2)$ and $\rmR\, (f,k) = (\rmR_k\, (f), k+2)$, where $\rmL_k$ and $\rmR_k$ are the usual lowering and raising operators, as before given by
\begin{gather*}
  \rmL_k
:=
  -2 i y^2 \ppartial{\ov\tau}
\quad\tx{and}\quad
  \rmR_k
:=
  2 i \ppartial{\tau} + k y^{-1}
\,\tx{.}
\end{gather*}

By viewing the second component of $(f,k)$ as a grading, it makes sense to speak of the graded module $\CC[\rmL, \rmR]\, f$, and study for which $k$ its $k$\thdash\ graded component is nonzero. We are further interested in the action of the lowering and raising operators on the graded components of $\CC[\rmL, \rmR]\, (f,k)$. We illustrate the typical behavior on the function $y^s$ with $s \in \CC$. We clearly have $\rmL_k\, (y^s) = s y^{s+1}$ and $\rmR_k\, (y^s) = (s+k) y^{s-1}$. If $s = 0$ or $s = -k$, then $y^s$ vanishes under $\rmL_k$ or $\rmR_k$, respectively. Rewriting this in terms of pairs $(f,k)$, we find that $\rmL (y^s, 0) = ( s y^{s+1}, -2)$ and $\rmR (y^s, 0) = (s y^{s-1}, 2)$.

We now exhibit the behavior of terms of the Fourier series in~\eqref{eq:elliptic:almost-holomorphic-fourier-coefficients}, \eqref{eq:elliptic:harmonic-fourier-coefficients}, \eqref{eq:elliptic:antiholomorphic-fourier-coefficients}, and~\eqref{eq:elliptic:anti-harmonic-fourier-coefficients} under lowering and raising operators. For any $n$, we have $\rmL (\phi_k(n;\tau), k) = 0$, and no power of $\rmR$ annihilates $(\phi_k(n;\tau), k)$. The situation for $\psi_k(n;\tau), k)$ is similar. We have $\rmL^{k+1} (\psi_k(n;\tau), k) = (0, -k-2)$. In addition, $\rmL \rmR (\psi_k(n;\tau), k) = 0$. The behavior of $\wtd\phi_k$ and $\wtd\psi$ is analogous.

We now introduce a schematic way to describe these modules. Every graded component which is nonzero corresponds to a filled circle. Graded components that are zero correspond to (small) circles. Both are placed on a line to emphasize the $2\ZZ$\nbd grading. We further encircle one dot that corresponds to the graded component which we are focusing on. For example, in the case of $(y^{-d},0)$ we would encircle the $0$\thdash\ graded component, and when considering $(\phi_k,k)$ we encircle the $k$\thdash\ one. Finally, we separate $\CC[\rmR,\rmL]$\nbd submodules by vertical lines decorated with an arrow. It means that applying the lowering operator (if the arrow points rightwards) or the raising operator (if the arrow points leftwards) applied to this graded component equals zero. In addition, we insert vertical dashed lines to indicate the relation to walls in so-called principal series representations. Here are diagrams for $\phi_{k[d]}$, $\psi_k$, $\wtd\phi_k$, and $\wtd\psi_k$.
\begin{center}
\begin{tikzpicture}
\draw[draw=white] (-4.5,0.3) rectangle (-1,-0.3) node[pos=0.5] {$\big(\phi_{k[d]},k\big)$};

\foreach \ix in {0,...,3}{
  \draw (\ix 6pt, 0pt) circle[radius=2pt];
}
\foreach \ix in {4,...,11}{
  \draw (\ix 6pt, 0pt) circle[radius=2pt];
}
\foreach \ix in {12,...,16}{
  \fill (\ix 6pt, 0pt) circle[radius=2pt];
}
\draw (15 6pt, 0pt) circle[radius=4pt] node[above=3pt] {$k$};

\draw[-] (-1 6pt, 0pt) -- (17 6pt, 0pt);

\draw[dashed,thick=3pt] (4 6pt, -10pt) -- (4 6pt, 10pt)
  node[above] {$2d-k$};

\draw[-,thick=3pt] (12 6pt, -10pt) -- (12 6pt, 10pt)
  node[above] {$k-2d$};
\draw[->,thick=3pt] (12 6pt - 4pt, 7pt) -- (12 6pt + 4pt, 7pt);
\end{tikzpicture}%
\vspace{1.5em}

\begin{tikzpicture}
\draw[draw=white] (-4.5,0.3) rectangle (-1,-0.3) node[pos=0.5] {$\big(\psi_k,k\big)$};

\foreach \ix in {0,...,4}{
  \draw (\ix 6pt, 0pt) circle[radius=2pt];
}
\foreach \ix in {5,...,12}{
  \fill (\ix 6pt, 0pt) circle[radius=2pt];
}
\foreach \ix in {13,...,16}{
  \fill (\ix 6pt, 0pt) circle[radius=2pt];
}
\draw (11 6pt, 0pt) circle[radius=4pt] node [above=3pt] {$k\hspace{1em}$};

\draw[-] (-1 6pt, 0pt) -- (17 6pt, 0pt);

\draw[dashed,thick=3pt] (4 6pt, -10pt) -- (4 6pt, 10pt)
  node[above] {$-k-2\hspace{1em}$};

\draw[-,thick=3pt] (12 6pt, -10pt) -- (12 6pt, 10pt)
  node[above] {$\hspace{1em}k+2$};
\draw[->,thick=3pt] (12 6pt - 4pt, 7pt) -- (12 6pt + 4pt, 7pt);
\end{tikzpicture}%
\vspace{1.5em}

\begin{tikzpicture}
\draw[draw=white] (-4.5,0.3) rectangle (-1,-0.3) node[pos=0.5] {$\big(\wtd\phi_k, k\big)$};

\foreach \ix in {0,...,4}{
  \fill (\ix 6pt, 0pt) circle[radius=2pt];
}
\foreach \ix in {5,...,12}{
  \draw (\ix 6pt, 0pt) circle[radius=2pt];
}
\foreach \ix in {13,...,16}{
  \draw (\ix 6pt, 0pt) circle[radius=2pt];
}
\draw (4 6pt, 0pt) circle[radius=4pt];

\draw[-] (-1 6pt, 0pt) -- (17 6pt, 0pt);

\draw[-,thick=3pt] (4 6pt, -10pt) -- (4 6pt, 10pt)
  node[above] {$k$};
\draw[->,thick=3pt] (4 6pt + 4pt, 7pt) -- (4 6pt - 4pt, 7pt);

\draw[dashed,thick=3pt] (12 6pt, -10pt) -- (12 6pt, 10pt)
  node[above] {$-k$};
\end{tikzpicture}%
\vspace{1.5em}

\begin{tikzpicture}
\draw[draw=white] (-4.5,0.3) rectangle (-1,-0.3) node[pos=0.5] {$\big(\wtd\psi_k, k\big)$};

\foreach \ix in {0,...,4}{
  \fill (\ix 6pt, 0pt) circle[radius=2pt];
}
\foreach \ix in {5,...,11}{
  \fill (\ix 6pt, 0pt) circle[radius=2pt];
}
\foreach \ix in {12,...,16}{
  \draw (\ix 6pt, 0pt) circle[radius=2pt];
}
\draw (5 6pt, 0pt) circle[radius=4pt] node[above=3pt] {$\hspace{1em}k$};

\draw[-] (-1 6pt, 0pt) -- (17 6pt, 0pt);

\draw[-,thick=3pt] (4 6pt, -10pt) -- (4 6pt, 10pt)
  node[above] {$k-2\hspace{1em}$};
\draw[->,thick=3pt] (4 6pt + 4pt, 7pt) -- (4 6pt - 4pt, 7pt);

\draw[dashed,thick=3pt] (12 6pt, -10pt) -- (12 6pt, 10pt)
  node[above] {$\hspace{1em}-k+2$};
\end{tikzpicture}%
\end{center}

We now come back to the spectral decomposition of Poincar\'e series. We illustrate our approach in the first case $\rmP_{k[d],\ell;n,m}$. It is central for the more general discussion in Section~\ref{sec:harish-chandra-modules} to decompose the action of lowering operators and raising operators into two parts according to the following analogue to the Leibniz product rule.
\begin{multline*}
  \rmL_{k+\ell} \Big(
  \phi_{k[d]}(n;\tau)
  \cdot
  \phi_\ell(m;\tau) \big|_{k+\ell} \,\gamma
  \Big)
=
  \rmL_{k+\ell} \Big(
  \phi_{k[d]}(n;\tau) \big|_k \,\gamma
  \cdot
  \phi_\ell(m;\tau) \big|_\ell \,\gamma
  \Big)
\\
=
  \rmL_k \Big(
  \phi_{k[d]}(n;\tau) \big|_k \,\gamma \
  \Big)
  \cdot
  \phi_\ell(m;\tau) \big|_\ell \,\gamma
  \,+\,
  \phi_{k[d]}(n;\tau) \big|_k \,\gamma
  \cdot
  \rmL_\ell \Big(
  \phi_\ell(m;\tau) \big|_\ell \,\gamma
  \Big)
\tx{.}
\end{multline*}
An analogous formula holds for the raising operator. Applying both formulas iteratively yields that 
\begin{multline*}
  \CC[\rmL, \rmR]\, \big( \rmP_{k[d],\ell;n,m} ,\, k+\ell \big)
\subseteq
  \CC[\rmL, \rmR]\, \big( \phi_{k[d]}(n;\,\cdot\,) \cdot \phi_\ell(m;\,\cdot\,) ,\, k+\ell \big)
\\
\subseteq
  \Big( \CC[\rmL, \rmR]\, \big( \phi_{k[d]}(n;\,\cdot\,),\, k \big) \Big)
  \otimes
  \Big( \CC[\rmL, \rmR]\, \big( \phi_\ell(m;\,\cdot\,),\, \ell \big) \Big)
\tx{.}
\end{multline*}
The tensor product of $(f,k)$ and $(g,\ell)$ is defined as $(fg, k+\ell)$, which is in accordance with the product $f|_k\,\gamma \cdot g|_\ell\,\gamma = (fg)|_{k+\ell}\,\gamma$.

With this machinery, we can now read off an upper bound for the support of the graded module $\CC[\rmL, \rmR]\, \big(\rmP_{k[d],\ell;n,m},\, k + \ell \big)$. We have already given diagrams for the left and right tensor component that correspond to it. All nonzero graded components are $1$-dimensional, and we find that the weights of the nonzero graded components in the tensor product are at least $k-2d+\ell$.

As a last step, we use the classification of Harish-Chandra modules for $\SL{2}(\RR)$: (Limits of) holomorphic discrete series are the only Harish-Chandra admissible modules which have a lowest weight. Therefore, the Harish-Chandra module attached to $\rmP_{k[d],\ell;n,m}$ is a (finite) direct sum of (limits of) holomorphic discrete series. Graded components of such discrete series correspond to almost holomorphic modular forms, and this proves again what we have already observed in Section~\ref{ssec:elliptic:almost-holomorphic}: The Poincar\'e series $\rmP_{k[d],\ell;n,m}$ decomposes as a finite sum of almost holomorphic modular forms.

\paragraph{The inconclusive cases}
To analyze the Poincar\'e series in~\eqref{eq:elliptic:harmonic-poincare-series} and~\eqref{eq:elliptic:antiholomorphic-poincare-series}, we have to consider the tensor product of the $\CC[\rmL,\rmR]$-modules generated by $\wtd\phi_k$ and $\phi_\ell$, and by $\wtd\psi_k$ and $\phi_\ell$, respectively. These tensor products are supported on all weights and all weight spaces are infinite dimensional. Thus, we cannot deduce anything definite. Nevertheless, this at least suggests that infinitely many Maass cusp forms appear in the spectral decomposition, which is what we verified directly in Section~\ref{ssec:elliptic:spectral-decomposition}.

\section{Real-analytic Siegel modular forms}
\label{sec:siegel}

We start by introducing necessary notation to define Siegel modular forms. Let $I = I^{(2)}$ be the~$2 \times 2$ identity matrix, $\Gamma := \Gamma^{(2)} := \Sp{2}(\ZZ)$ be the symplectic group of degree~$2$ over $\ZZ$, $\HS^{(2)}$ be the Siegel upper half space of degree~$2$, and let $Z = X+iY \in \HS^{(2)}$ be a typical variable. If $M = \begin{psmatrix} A & B \\ C & D \end{psmatrix} \in \Gamma$ and $Z \in \HS^{(2)}$, then
\begin{gather*}
  M \bullet Z
:=
  (AZ+B) (CZ+D)^{-1}
\tx{.}
\end{gather*}
Furthermore, if $F:\, \HS^{(2)} \ra \CC$ and if $k \in \ZZ$, then
\begin{gather}
\label{eq:siegel:slash-action}
  \big( F \big|_k\,M\ \big) (Z)
:=
  \det(CZ+D)^{-k}\, F(M \bullet Z)
\end{gather}
for all $M=\begin{psmatrix}A & B\\ C & D\end{psmatrix} \in \Gamma$.

\begin{definition}
A holomorphic (degree~$2$) Siegel modular form of weight $k$ on $\Gamma$ is a holomorphic function $F :\, \HS^{(2)} \ra \CC$ such that, for all $M \in \Gamma$, $F\big|_k\,M = F$.
\end{definition}

\subsection{Almost holomorphic Siegel modular forms}
\label{ssec:siegel:almost-holomorphic}

Almost holomorphic Siegel modular forms were introduced by Shimura~\cite{shimura-1987}. In degree~$2$, they were classified in~\cite{pitale-saha-schmidt-2015,klemm-poretschkin-schimannek-raum-2015}. Write $Z = (z_{i\!j})$ and set $\ppartial{\ov{Z}} := \big( \frac{1}{2}(1 + \delta_{i\!j}) \ppartial{\ov{z_{i\!j}}} \big)$ to define the lowering operator
\begin{gather}
\label{eq:siegel:lowering-operator}
  \rmL := \rmL^{(2)}
:=
  Y \rT\, \big( Y \ppartial{\ov{Z}} \big)
\tx{.}
\end{gather}
By the $d$\thdash\ power of $\rmL$, we mean its $d$\thdash\ tensor power.
\begin{definition}
An almost holomorphic (degree~$2$) Siegel modular form of weight $k$ and depth~$d$ on $\Gamma$ is a real-analytic function $F :\, \HS^{(2)} \ra \CC$ satisfying the following conditions:
\begin{enumeratearabic}
\item For all $M \in \Gamma$, $F\big|_k\,M = F$.

\item We have that $\rmL^{d+1}(F) = 0$.
\end{enumeratearabic}
\end{definition}

\subsection{Harmonic Siegel-Maass forms}
\label{sec:siegel:harmonic-maass}

In \cite{bringmann-raum-richter-2011}, we introduced a certain space of harmonic (skew) Siegel-Maass forms, and we proved a connection of this space to the space of harmonic skew-Maass-Jacobi forms.  In particular, we answered a question of Kohnen~\cite{kohnen-1993} on how skew-holomorphic Jacobi forms are related to real-analytic Siegel modular forms.  We now introduce the $(\alpha,\beta)$\nbd slash action and the corresponding matrix-valued Laplace operator to recall the definition of harmonic (skew) Siegel-Maass forms in~\cite{bringmann-raum-richter-2011}.
 
If $F :\, \HS^{(2)} \ra \CC$, and if $\alpha, \beta \in \CC$ such that $\alpha-\beta\in\ZZ$, then
\begin{gather}
\label{eq:siegel:slash-action-general}
  \big( F\,\big|_{(\alpha,\,\beta)}\,M \big)(Z)
:=
  \det(CZ+D)^{-\alpha}\, \det(C\ov{Z} + D)^{-\beta}\, F(M \bullet Z)
\end{gather}
for all $M=\begin{psmatrix}A & B\\ C & D\end{psmatrix} \in \Gamma$. If $\beta=0$, we have $\big( F \big|_\alpha\, M  \big)(Z) = \big( F \big|_{(\alpha,\,\beta)}\,M \big)(Z)$.  As before, write $Z=(z_{i\!j})$ and set $\ppartial{Z} := \big( \frac{1}{2}(1 + \delta_{i\!j}) \ppartial{z_{i\!j}} \big)$ to define the Laplace operator
\begin{gather}
\label{eq:siegel:laplace}
  \Omega_{\alpha, \beta}
:=
  - 4 Y \rT\, \big( Y \ppartial{\ov{Z}} \big) \ppartial{Z}
  - 2 i \beta Y \ppartial{Z} + 2 i \alpha Y \ppartial{\ov{Z}}
\tx{,}
\end{gather}
which is equivariant with respect to the action in \eqref{eq:siegel:slash-action-general} (see~\cite{Maass-1971} for details). For the remainder, assume that $\kappa$ is an odd integer such that $\kappa \not\in \{1,3\}$.

\begin{definition}
\label{def:siegel:harmonic-siegel-maass}
A harmonic (skew) Siegel-Maass form of weight $\kappa$ on $\Gamma$ is a real-analytic function $F :\, \HS^{(2)} \ra \CC$ satisfying the following conditions:
\begin{enumeratearabic}
\item For all $M\in\Gamma$, $F\,|_{\big( \frac{1}{2},\,\kappa-\frac{1}{2} \big)}\, M = F$.

\item We have that $\Omega_{\frac{1}{2}, \kappa-\frac{1}{2}}(F) = 0$.

\item We have that $|F(Z)| \leq c \, \tr(Y)^a$ for some $a,c>0$ as $\tr(Y) \ra \infty$.
\end{enumeratearabic}

Let $\bbM^\sk_\kappa$ denote the space of such harmonic Siegel-Maass forms of weight~$\kappa$.
\end{definition}

\begin{remark}
\label{rem:skew_vs_hol}
In~\cite{bringmann-raum-richter-2011} we only focused on (skew) Siegel-Maass forms of type $\big( \frac{1}{2}, \kappa-\frac{1}{2} \big)$, in order to establish links to the spaces of skew-Maass Jacobi forms (if $\kappa<0$) and skew-holomorphic Jacobi forms (if $\kappa>3$).  Nevertheless, many results of~\cite{bringmann-raum-richter-2011} extend to ``holomorphic weights'', which are more natural from a representation theoretic perspective. In fact, if $F \in \bbM^\sk_\kappa$, then the form $\det (Y)^{\kappa - 1 \slash 2} F(Z)$ has weight $k:=1-\kappa$, i.e., it is  invariant under \eqref{eq:siegel:slash-action} with $k = 1 - \kappa$, where $k$ is an even integer such that $k \not\in \{0, -2\}$. For convenience, we set
\begin{gather}
  \bbM_k
:=
  \bbM^{(2)}_k
:=
  \det(Y)^{\frac{1}{2} - k}\, \bbM^\sk_{1 -k}
\tx{.}
\end{gather}
\end{remark}

Let $\Gamma_{\infty} := \Gamma^{(2)}_{\infty} := \big\{\begin{psmatrix} A & B \\ 0 & D \end{psmatrix} \in \Gamma\, \big\}$. Recall Maass'~\cite{Maass-1953, Maass-1971} nonholomorphic Eisenstein series
\begin{gather}
\label{eq:maass-eisenstein-series}
  E_{\alpha, \beta}(Z)
:=
  \sum_{M \in \Gamma_{\infty}\setminus \Gamma}
  1 \big|_{(\alpha, \beta)}\, M
\end{gather}
and also the Poincaré-Eisenstein series
\begin{gather}
\label{eq:poincare-eisenstein-series}
  P_{\kappa,s}(Z)
:=
  \sum_{M \in \Gamma_{\infty} \backslash \Gamma}
  \det (Y)^s \big|_{\left(\frac{1}{2},\,\kappa -\frac{1}{2}\right)}\, M
\tx{.}
\end{gather}

Then $P_{\kappa,s} = \det(Y)^s E_{s + 1 \slash 2, s + \kappa - 1 \slash 2}$, and in~\cite{bringmann-raum-richter-2011} we stated the following fact.
\begin{proposition}
\label{prop:harmonic-siegel-maass-examples}
If $s=0$ ($\kappa>3$) or $s=\frac{3}{2} -\kappa$ ($\kappa<0$), then $P_{\kappa,s} \in \bbM^\sk_\kappa$. In other words, we have
\begin{alignat*}{3}
&
  \det(Y)^{\frac{1}{2} - k}\, P_{1-k, 0}
&{}\in
  \bbM_k
\quad&
  \tx{for $k < -2$;}
\\
&
  \det(Y)^{\frac{1}{2} - k}\, P_{1-k, \frac{1}{2} + k}
&{}\in
  \bbM_k
\quad&
  \tx{for $k > 1$.}
\end{alignat*}
\end{proposition}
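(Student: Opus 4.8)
The plan is to verify directly that $P_{\kappa,s}$ satisfies the three conditions of Definition~\ref{def:siegel:harmonic-siegel-maass} for the two distinguished values of $s$, exploiting the identity $P_{\kappa,s} = \det(Y)^s\, E_{s+\frac12,\,s+\kappa-\frac12}$ recorded just above and the equivariance of $\Omega_{\alpha,\beta}$ under $|_{(\alpha,\beta)}$. I would begin with convergence and the growth estimate (condition~(3)). Writing out a single summand of $P_{\kappa,s}$ for $M=\begin{psmatrix}A&B\\C&D\end{psmatrix}$, its absolute value equals $\det(Y)^s\,|\det(CZ+D)|^{-(2s+\kappa)}$, so the defining series converges absolutely and locally uniformly precisely when $2s+\kappa>3$, the classical degree-two threshold. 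For $s=0$ this reads $\kappa>3$, and for $s=\tfrac32-\kappa$ it reads $\kappa<0$; these are exactly the two ranges in the statement. Absolute convergence bounds $P_{\kappa,s}$ by its constant term, and since the prefactor $\det(Y)^s$ grows at most polynomially in $\tr(Y)$, one obtains $|P_{\kappa,s}(Z)|\le c\,\tr(Y)^a$, as required.

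Modularity (condition~(1)) is then automatic from the coset structure of the sum, provided the summand $\det(Y)^s$ is invariant under $\Gamma_\infty$ with respect to $|_{(\frac12,\,\kappa-\frac12)}$. For $M\in\Gamma_\infty$ one has $C=0$, whence $M\bullet Z=AY\rT A+(\text{real})$ and $\det(Y_M)=\det(A)^2\det(Y)=\det(Y)$, so invariance of $\det(Y)^s$ reduces to the triviality of the automorphy factor $\det(D)^{-\frac12}\det(D)^{-(\kappa-\frac12)}$ with $\det(D)=\pm1$. Here the hypothesis that $\kappa$ is odd is precisely what makes the two half-integral factors conspire to $1$ when $\det(D)=-1$, giving $\det(Y)^s|_{(\frac12,\,\kappa-\frac12)}M=\det(Y)^s$. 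Re-indexing $\Gamma_\infty\backslash\Gamma$ then yields $P_{\kappa,s}|_{(\frac12,\,\kappa-\frac12)}M=P_{\kappa,s}$.

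The crux is harmonicity (condition~(2)). Since $\Omega_{\frac12,\,\kappa-\frac12}$ is equivariant with respect to $|_{(\frac12,\,\kappa-\frac12)}$, term-by-term application gives
\begin{gather*}
  \Omega_{\frac12,\,\kappa-\frac12}\big(P_{\kappa,s}\big)
=
  \sum_{M\in\Gamma_\infty\backslash\Gamma}
  \big(\Omega_{\frac12,\,\kappa-\frac12}\det(Y)^s\big)\big|_{(\frac12,\,\kappa-\frac12)}\,M
\,\tx{,}
\end{gather*}
so it suffices to prove $\Omega_{\frac12,\,\kappa-\frac12}(\det(Y)^s)=0$. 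To compute this I would use that $\det(Y)^s$ depends on $Z$ only through $Y$, so that $\ppartial{Z}=\tfrac1{2i}\ppartial{Y}$ and $\ppartial{\ov Z}=-\tfrac1{2i}\ppartial{Y}$ act through the matrix identity $\ppartial{Y}\det(Y)^s=s\,\det(Y)^s\,Y^{-1}$ in the symmetric-variable convention. The two first-order terms of $\Omega$ then contribute $-(\alpha+\beta)\,s\,\det(Y)^s\,I=-\kappa\,s\,\det(Y)^s\,I$, while the second-order term $-4Y\,\rT\big(Y\ppartial{\ov Z}\big)\ppartial{Z}$ contributes a quadratic-in-$s$ multiple of $\det(Y)^s\,I$, the quadratic part arising from the inverse-derivative $\ppartial{Y}Y^{-1}$. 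Collecting both contributions gives $\Omega_{\frac12,\,\kappa-\frac12}(\det(Y)^s)=q(s)\,\det(Y)^s\,I$ with $q$ a quadratic; since the eigenvalue has no constant term one has $q(0)=0$ automatically, and the computation identifies the second root as $s=\tfrac32-\kappa$. Thus $q$ vanishes at both distinguished values of $s$.

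The main obstacle is exactly this last computation: pushing the matrix-valued second-order operator $\Omega$ through $\det(Y)^s$ while correctly bookkeeping the symmetric-variable derivatives, the transpose, and the derivative of $Y^{-1}$, and confirming that the outcome is a scalar multiple of $I$ with the advertised roots $0$ and $\tfrac32-\kappa$. Everything else—the absolute convergence, the coset re-indexing, and the equivariance of $\Omega$—is standard or quoted from \cite{Maass-1971,bringmann-raum-richter-2011}.
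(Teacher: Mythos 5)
Your proposal cannot coincide with ``the paper's proof'' for a simple reason: the paper does not prove this proposition at all---it records the identity $P_{\kappa,s}=\det(Y)^s E_{s+1\slash 2,\,s+\kappa-1\slash 2}$ and then imports the statement as a fact from \cite{bringmann-raum-richter-2011}. Your self-contained verification is therefore a genuinely different (and the natural) route, and its three steps are sound. Convergence: a summand has absolute value $\det(Y)^s\,|\det(CZ+D)|^{-(2s+\kappa)}$, and the degree-two threshold $2s+\kappa>3$ reproduces exactly the ranges $\kappa>3$ (for $s=0$) and $\kappa<0$ (for $s=\tfrac32-\kappa$); since $\Gamma_\infty\backslash\Gamma$ has a single coset with $C=0$, the series is $\det(Y)^s$ plus a remainder controlled by standard Eisenstein-series estimates, giving condition (3). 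Modularity: invariance of the seed under $\Gamma_\infty$ comes down to the fact that $\det(CZ+D)^{-\alpha}\det(C\ov Z+D)^{-\beta}$ equals $|\det(CZ+D)|^{-(\alpha+\beta)}$ times the $-(\alpha-\beta)$\nbd th power of the phase, and $\alpha-\beta=1-\kappa$ is an \emph{even} integer precisely because $\kappa$ is odd; this is the rigorous form of your ``conspiring half-integral factors''. Harmonicity: the seed identity you assert is true; carrying out the matrix computation with $\ppartial{Y}\det(Y)^s=s\det(Y)^s\,Y^{-1}$ and $\ppartial{Y}Y^{-1}$ gives
\begin{gather*}
  \Omega_{\frac12,\,\kappa-\frac12}\big(\det(Y)^s\big)
  =
  -s\big(s+\kappa-\tfrac32\big)\,\det(Y)^s\, I
  \tx{,}
\end{gather*}
with roots $s=0$ and $s=\tfrac32-\kappa$, as claimed. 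One slip in your narrative of this computation: the quadratic term $-s^2$ does \emph{not} come from the inverse-derivative; it arises when the second derivative hits $\det(Y)^s$ again (two factors $s\det(Y)^s Y^{-1}$ contracted against the two outer copies of $Y$), whereas $\ppartial{Y}Y^{-1}$ produces the \emph{linear} contribution $+\tfrac32 s$---and it is exactly this $\tfrac32$ that moves the second root to $\tfrac32-\kappa$ in degree two, rather than $1-\kappa$ as the analogous degree-one computation would give. In sum: your route buys a proof checkable entirely within this paper's notation (Definition~\ref{def:siegel:harmonic-siegel-maass}, the slash action~\eqref{eq:siegel:slash-action-general}, and the Laplacian~\eqref{eq:siegel:laplace}), while the paper's citation buys brevity and defers the computation to \cite{bringmann-raum-richter-2011}, whose Eisenstein-series examples also underlie the Fourier coefficients $\Psi_k$ used in Section~\ref{sec:poincare-series}.
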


We end this section with a remark on terms of the Fourier series of harmonic Siegel-Maass forms.
\begin{remark}
\label{rm:siegel:fourier-expansion}
In~\cite{bringmann-raum-richter-2011}, we determined Fourier series expansions of harmonic Siegel-Maass forms: For any non-degenerate, symmetric, half-integral $2 \times 2$ matrix~$T$ there exist functions $\Psi_k(T;\, Z)$ such that the $T$\thdash\ term of the Fourier series of any $F \in \bbM_k$ is given by
\begin{gather*}
  c(F;\, T)\, \Psi_k(T;\,Z)
\tx{,}
\qquad
  \tx{where $c(F;\, T) \in \CC$.}
\end{gather*}
The ideas in the proof of Proposition~\ref{prop:siegel-fourier-coefficient-is-not-almost-holomorphic} can be used to show that for $k < -2$, $\Psi_k(T;Z) = 0$ for positive definite $T$.
\end{remark}

\section{Real-analytic Siegel-Poincar\'e series}
\label{sec:poincare-series}

In this Section, we define Poincar\'e series attached to products of terms of Fourier series of holomorphic Siegel modular forms and harmonic Siegel-Maass forms. Recall from Proposition~\ref{prop:harmonic-siegel-maass-examples} that $\det(Y) E_{1+k,1} \in \bbM_k$. We can use its Fourier series to define $\Psi_k(T;Z)$ in Remark~\ref{rm:siegel:fourier-expansion}.

If $\Re(\alpha)$, $\Re(\beta)>\frac{1}{2}$, then (see~\cite{Maass-1971}) the terms of the Fourier series of the Eisenstein series $E_{\alpha, \beta}$ are, up to scalar multiples, given by
\begin{gather*}
  h_{\alpha, \beta}(T;Y)\, e^{2\pi i\, \tr(TX)}
\tx{,}
\end{gather*}
where
\begin{gather}
\label{eq:def_of_h}
  h_{\alpha, \beta}(T;Y)
:=
  \mathop{\int\!\!\int}_{U\pm T>0}
  \det(U+T)^{\alpha-\frac{3}{2}}\, \det(U-T)^{\beta-\frac{3}{2}}
  e^{-2\pi \tr(YU)} \;
  d\!U
\tx{.}
\end{gather}

For the remainder, let $T$ be positive definite, which we denote by~$T > 0$. Set
\begin{gather}
\label{eq:poincare:harmonic-fourier-coefficient}
  \Psi_k(T;Z)
:=
  \det (T Y)\, h_{k+1,1}(T;\, Y)\, e^{2 \pi i\, \tr(T X)}
\quad\tx{and}\quad
  \Phi_\ell(T;Z)
:=
  e^{2 \pi i\, \tr(TZ)}
\,\tx{,}
\end{gather}
and for $T' > 0$ and positive even integers~$k$ and $\ell$ define the Poincar\'e series
\begin{gather}
\label{eq:siegel:poincare-series}
  P^{(2)}_{k,\ell;\, T, T'}(Z)
:=
  \sum_{M \in \Delta \backslash \Gamma}
  \big( \Psi_k(T;Z) \cdot \Phi_\ell(T';Z) \big) \big|_{k+\ell}\, M
\tx{,}
\end{gather}
where $\Delta$ is the subgroup of $\Gamma$ defined by
\begin{gather*}
  \Delta
:=
  \left\{ \begin{psmatrix} I & B \\ 0 & I \end{psmatrix}, B \in \Mat{2}(\ZZ), \rT B = B \right\}
\tx{.}
\end{gather*}
Observe that $\Phi_k (T; Z+B) = \Phi_k (T; Z)$ and $\Psi_\ell(T';Z+B)=\Psi_\ell(T';Z)$, and one finds that $ P^{(2)}_{k,\ell;\, T, T'}$ is well-defined.

\subsection{Convergence}  

We determine the convergence of $ P^{(2)}_{k,\ell;\, T, T'}(Z)$ by comparing it to the  Poincar\'e series
\begin{gather*}
  P_{k',T}(Z)
:=
  \sum_{M \in \Delta\setminus\Gamma}
  e^{2 \pi i\, \tr(TZ)} \big|_{k'}\, M
\text{,}
\end{gather*}
which converges absolutely and uniformly on compact subsets of $\HS^{(2)}$ for even integers $k' \ge 6$ (see, for example, Proposition~3 on page~85 of~\cite{klingen-1990}). 

Consider $\det(Y)\, h_{k+1,1}(T;\, Y)$, where  $T > 0$ and $Y>0$. Let $0<\lambda_1\leq \lambda_2$ be the eigenvalues of $TY$.  Then $\det(TY)=\lambda_1\lambda_2$, $\tr(TY)=\lambda_1+\lambda_2$, and $\lambda_1\geq\frac{\det(TY)}{\tr(TY)}$.

Shimura~\cite{shimura-1982} studied a function $\omega(g,h;\alpha,\beta)$, which is closely related to \eqref{eq:def_of_h}.  Specifically, if $g=2\pi Y$, $h=T$, $\alpha=k+1$, and $\beta=1$, then
\begin{gather*}
  \omega(g,h;\alpha,\beta)
:=
  \frac{2^{-2-2k}}{\pi}\,
  \det(2\pi TY)^{\frac{1}{2}-k} \det(2\pi Y)^{k+\frac{1}{2}}
  \cdot
   h_{1+k,1}(T;\, Y)
\text{.}
\end{gather*} 
Shimura established an estimate for $\omega$, which implies that there exist constants $a,b>0$ (depending on $\alpha$ and $\beta$, i.e., on $k$) such that
\begin{gather}
\label{eq:shimuras-A-and-B}
\begin{split}
  \det(T Y) h_{k+1,1}(T;\, Y)
&{}
\leq
  \det(T Y)\, 2^{2k+2}\pi\,
  \det(2\pi TY)^{k - 1 \slash 2}
  \det(2\pi Y)^{-k - 1 \slash 2}\;
   a\, e^{-2 \pi\, \tr(TY)} \big(1 + \lambda_1^{-b} \big)
\\
&{}
\leq
  c\, e^{-2\pi\, \tr(TY)}
  \Big( 1 + \Big( \frac{\tr(TY)}{\det(TY)} \Big)^b \Big)
\tx{,}
\end{split}
\end{gather} 
where $c:=a \, 2^{1+2k} \det(T)^{k - b + 1 \slash 2}$. Note that $\tr(TY)^b\, e^{-2\pi\, \tr(TY)} \leq d e^{-\pi\, \tr(TY)}$ for some constant $d>0$. Thus, 
\begin{gather}
\label{eq:bound_for_h}
  \det(T Y) h_{k+1,1}(T;Y)
\ll
  \big( 1 + \det(Y)^{-b} \big)\, e^{-\pi\, \tr(TY)}
\tx{,}
\end{gather}
and one finds that $ P^{(2)}_{k,\ell;\, T, T'}(Z)$ in~\eqref{eq:siegel:poincare-series} is dominated by
\begin{gather}
\label{eq:majorant}
  \sum_{M \in \Delta \backslash \Gamma}
  \Big( \big( 1 + \det(Y)^{-b} \big)\, e^{-\pi\, \tr(\wtd{T}Y)} \Big)
  \big|_{k+\ell}\, M
\end{gather}
with $\wtd{T} := T+T' > 0$. We conclude that $P^{(2)}_{k,\ell;\, T, T'}(Z)$ converges absolutely and uniformly on compact subsets of $\HS^{(2)}$ if $\ell+k-2b\ge6$.

\subsection{Non-vanishing}

In this Section, we show that $\lim_{\ell \ra \infty} P^{(2)}_{k,\ell;\, T, T'}(iy_0 I)>0$ for some $y_0>1$, which implies that $\bbP^{(2)}_{k,\ell;T,T'}$ does not vanish identically for all $\ell$ large enough.  We apply the following lemma of Kowalski, Saha, and Tsimerman~\cite{kowalski-saha-tsimerman-2011}, 
where $U(y_0)$ stands for some neighborhood of $i y_0 I$ ($y_0>0$). 

\begin{lemma}[{\cite{kowalski-saha-tsimerman-2011}}]
\label{lem: KST}
There exists a real number $y_0 > 1$ such that for any $\left(\begin{smallmatrix} A & B \\ C & D \end{smallmatrix}\right) \in \Gamma$ with $C \neq 0$ and for all $Z \in U(y_0)$, we have $|\det(CZ+D)|>1$.
\end{lemma}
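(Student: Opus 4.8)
The plan is to reduce the statement to a pointwise lower bound for the automorphy factor that is \emph{uniform} in $M=\begin{psmatrix}A&B\\C&D\end{psmatrix}\in\Gamma$. Since we may shrink the neighborhood at will, I would fix $y_0>1$ and choose $U(y_0)$ small enough that, writing $Y=\Im Z$, both $\det Y>1$ and the least eigenvalue $\lambda_{\min}(Y)>1$ hold throughout $U(y_0)$; this is possible by continuity of eigenvalues, since $\Im(iy_0 I)=y_0 I$ has $\det Y=y_0^2$ and $\lambda_{\min}(Y)=y_0$. It then suffices to bound $|\det(CZ+D)|$ below by $\min\{\det Y,\lambda_{\min}(Y)\}$ whenever $C\neq0$. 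Throughout I would exploit the symplectic relation $C\rT D=D\rT C$, splitting into the cases $\rk C=2$ and $\rk C=1$.

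If $\det C\neq0$, then $|\det C|\ge1$ (a nonzero integer) and $S:=C^{-1}D$ is symmetric, since $C\rT D=D\rT C$ gives $\rT(C^{-1}D)=C^{-1}D$. Factoring $CZ+D=C(Z+S)$ and writing $Z+S=Y^{1/2}\big(\tilde X+iI\big)Y^{1/2}$ with $\tilde X:=Y^{-1/2}(X+S)Y^{-1/2}$ real symmetric, the eigenvalues of $\tilde X$ are real, so $|\det(\tilde X+iI)|\ge1$ and hence $|\det(CZ+D)|=|\det C|\,\det Y\,|\det(\tilde X+iI)|\ge\det Y$. This case is already uniform.

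The delicate case, which I expect to be the main obstacle, is $\rk C=1$: here $\det C=0$, the factorization collapses, and the naive estimate breaks down. I would first normalize $C$ by left multiplication with a block-diagonal element of $\Gamma$, which replaces $(C,D)$ by $(VC,VD)$ for any $V\in\GL{2}(\ZZ)$ and leaves $|\det(CZ+D)|$ unchanged. Choosing $V$ to put $C$ in Hermite form $\begin{psmatrix}c_1&c_2\\0&0\end{psmatrix}$ (and writing $C,D$ again for the transformed pair), the automorphy factor becomes the scalar $\det(CZ+D)=\rT\gamma\,Z\,\delta+\det D$ with $\gamma=\rT(c_1,c_2)$ and $\delta=\rT(d_4,-d_3)$. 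The symmetry of $C\rT D$ forces $c_1 d_3+c_2 d_4=0$, i.e.\ $\gamma\perp(d_3,d_4)$; since $\delta\perp(d_3,d_4)$ as well and $(d_3,d_4)\neq0$ (the rows of $(C\mid D)$ are independent), the vectors $\gamma$ and $\delta$ are parallel, say $\gamma=\lambda\delta$. Consequently $\Im\det(CZ+D)=\lambda\,\rT\delta\,Y\,\delta$ is a nonzero multiple of the positive definite form $\rT\delta\,Y\,\delta$, while $|\lambda|\,|\delta|^2=|\rT\gamma\,\delta|\ge1$ because $\rT\gamma\,\delta$ is a nonzero integer. Therefore $|\det(CZ+D)|\ge|\Im\det(CZ+D)|\ge\lambda_{\min}(Y)\,|\rT\gamma\,\delta|\ge\lambda_{\min}(Y)$.

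The genuine difficulty is the uniformity in the rank-one case: because the entries of $C$ and $D$ are unbounded, pointwise continuity at $iy_0 I$ does not produce a single neighborhood valid for all $M$ at once, and one might fear cancellation in $\rT\gamma\,Z\,\delta+\det D$ for large entries. The resolution is structural rather than analytic—the symplectic relation pins $\gamma$ parallel to $\delta$, so that the imaginary part of the automorphy factor is a genuinely positive definite quadratic form in $Y$ rather than an indefinite linear expression, which yields the clean uniform bound $\lambda_{\min}(Y)$. Combining the two cases gives $|\det(CZ+D)|>1$ for every $M$ with $C\neq0$ and every $Z\in U(y_0)$, which is the assertion of the lemma.
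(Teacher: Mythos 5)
Your proof is correct, but it is not comparable to anything in the paper: the paper does not prove this lemma at all, it simply quotes it from Kowalski--Saha--Tsimerman \cite{kowalski-saha-tsimerman-2011} and uses it as a black box in the nonvanishing argument. What you have supplied is a self-contained elementary replacement. Your two key steps both check out. In the case $\det C \neq 0$, the symplectic relation $C\rT D = D\rT C$ makes $S = C^{-1}D$ symmetric, and the factorization $CZ+D = C\,Y^{1/2}(\wtd X + iI)Y^{1/2}$ with $\wtd X$ real symmetric gives $|\det(CZ+D)| \ge |\det C|\det Y \ge \det Y$, since $|\det(\wtd X + iI)| = \prod_j (\lambda_j^2+1)^{1/2} \ge 1$. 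In the case $\rk C = 1$, the normalization by a block-diagonal element of $\Gamma$ is legitimate (it preserves membership in $\Gamma$ and the absolute value of the automorphy factor), your formula $\det(CZ+D) = \rT\gamma Z \delta + \det D$ is verified by direct expansion, and the symmetry of $C\rT D$ indeed forces $\gamma$ and $\delta$ to be parallel nonzero vectors (both lie in the one-dimensional orthogonal complement of $(d_3,d_4) \neq 0$, and $\gamma \neq 0$ because $\rk C = 1$), so that $|\det(CZ+D)| \ge |\Im \det(CZ+D)| = |\lambda|\,\rT\delta Y \delta \ge |\rT\gamma\delta|\,\lambda_{\min}(Y) \ge \lambda_{\min}(Y)$, the last step using that $\rT\gamma\delta = c_1d_4 - c_2d_3$ is a nonzero integer. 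Since both lower bounds $\det Y$ and $\lambda_{\min}(Y)$ are uniform in $M$ and exceed $1$ on a small enough neighborhood of $iy_0 I$ for any fixed $y_0 > 1$, the lemma follows; in fact your argument proves slightly more than the citation provides, namely that \emph{every} $y_0 > 1$ admits such a neighborhood. The trade-off is the expected one: the citation keeps the paper short, while your argument makes Section~\ref{sec:poincare-series} independent of \cite{kowalski-saha-tsimerman-2011} at the cost of a page of linear algebra. One cosmetic point: when you write ``$|\rT\gamma\,\delta|\ge1$ because $\rT\gamma\,\delta$ is a nonzero integer,'' the nonvanishing should be justified by $\rT\gamma\delta = \lambda|\delta|^2$ with $\lambda \neq 0$ and $\delta \neq 0$ (which you have), not merely asserted; as phrased it reads slightly circular, but the ingredients are all present in your text.
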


Let $M=\left(\begin{smallmatrix} A & B \\ C & D \end{smallmatrix}\right) \in \Gamma$. Suppose that $C\not=0$. Choose $y_0>1$ as in Lemma~\ref{lem: KST} and consider $Z=i y_0 I$.  Then $|\det(CZ+D)|>1$, and if $T > 0$, then
\begin{gather*}
  \det(CZ+D)^{-k-\ell}
  \big(1+|\det(CZ+D)|^{2b}\big)\,
  e^{2 \pi i \tr(T\, M \bullet Z)}
\xrightarrow{\ell \ra \infty}
  0
\tx{.}
\end{gather*}
Recall that~\eqref{eq:majorant} is a majorant of~\eqref{eq:siegel:poincare-series}, which converges for all $\ell \ge 6-k+2b$, and we find that
\begin{gather*}
  \sum_{\substack{M = \begin{psmatrix}\ast & \ast \\ C & \ast \end{psmatrix}
                      \in \Delta \backslash \Gamma \\
                  C \ne 0}}
  \Big( \big(\Psi_k(T;Z) \cdot \Phi_\ell(T';Z)\big) \Big)
  \big|_{k+\ell}\, M
\xrightarrow{\ell \ra \infty}
  0
\tx{.}
\end{gather*}

Finally, suppose that $C=0$. Then $D = \rT{A}^{-1}$. Moreover, we factor out by $\Delta$, and hence we may (and do) assume that $B=0$. 
If $M=\left(\begin{smallmatrix} A & 0 \\ 0 & \rT A^{-1} \end{smallmatrix}\right)$ and again $Z=i y_0 I$, then
\begin{gather*}
  \det\big( \Im\big(M \bullet i y_0 I \big)\big)
=
  \det\big( A\, (y_0 I) \rT{A} \big)
=
  y_0^2 \det(A)^2
=
  y_0^2
\tx{.}
\end{gather*}
Thus, 
\begin{gather*}
  \sum_{M = \begin{psmatrix}\ast & \ast \\ 0 & \ast \end{psmatrix}
            \in \Delta \backslash \Gamma}
  \Big( \big(\Psi_k(T;Z) \cdot \Phi_\ell(T';Z)\big) \Big)
  \big|_{k+\ell}\, M
=
  y_0^2
  \sum_{A\in \GL{2}(\ZZ)}
  \det(A)^{-k-\ell} h_{k+1,1}(T, y_0 I)
  e^{-2\pi y_0\, \tr(\wtd{T} A \rT{A})}
\,\tx{,}
\end{gather*}
which is positive ($k$ and $\ell$ are even), where again $\wtd{T} := T+T' > 0$. In particular,  
\begin{gather*}
  \lim_{\ell \ra \infty}
  P^{(2)}_{k,\ell; T, T'}(i y_0 I)
>
  0
\tx{.}
\end{gather*}

\section{\texpdf{$(\frakg,K)$-modules generated by Poincar\'e series}{(g,K)-modules generated by Poincar\'e series}}
\label{sec:harish-chandra-modules}

Throughout this section, we focus on \gKmodules\ for $G := \Sp{2}(\RR)$.  We diverge from the classical notation $M \in \Sp{2}(\RR)$ in favor of the representation theoretic notation $g \in G$. Recall the realization of the symplectic group as
\begin{gather*}
  \Sp{2}(\RR)
=
  \big\{ g \in \Mat{4}(\RR) \,:\, \rT g J^{(2)} g = J^{(2)} \big\}
\tx{,}
\quad
  J^{(2)} = \begin{pmatrix} 0 & -I^{(2)} \\ I^{(2)} & 0 \end{pmatrix}
\tx{,}
\end{gather*}
with maximal compact subgroup
\begin{gather*}
  K
=
  \Big\{
  \begin{pmatrix} A & -B \\ B & A \end{pmatrix} \,:\,
  A + iB \in \U{2}(\RR)
  \Big\}
\cong
  \U{2}(\RR)
\tx{,}
\end{gather*}
where $\U{2}(\RR)$ are the $\RR$-points of the unitary group $\U{2}$ attached to the quadratic extension $\CC \slash \RR$, which as an algebraic group is defined over $\RR$.

Irreducible representations of $\U{2}(\RR)$ are isomorphic to $\det^k \sym^\ell := \det^k \otimes \sym^\ell$ for some $k \in \ZZ$, $\ell \in \ZZ_{\ge 0}$. This is the classical way of denoting weights for Siegel modular forms. In the context of real analytic representation theory, it is more common to parametrize irreducible $K$\nbd representations by integers $a,b \in \ZZ$ subject to the condition that $a \ge b$. This notion stems from the action of the center of $\frakk$ (the complexified Lie algebra of~$K$) on a representation. Translation between the two conventions is straightforward: The pair $(a,b)$ corresponds to $\det^b \sym^{b-a}$, while the weight $\det^k \sym^\ell$ corresponds to $(k+\ell,k)$.

A \gKmodule\ is a simultaneous $\frakg$ and $K$-module with compatibility relations imposed on them. A precise definition can be found in Section 3.3.1 of~\cite{wallach-1988}. One invariant of \gKmodules\ is the set of non-trivial $K$\nbd types (i.e.,\ irreducible $K$\nbd representations). A \gKmodule~$\varpi$ viewed as a $K$\nbd representation can be decomposed as a direct sum of irreducibles. We say that a $K$\nbd type $\pi_K$ occurs in $\varpi$ if $\dim\, \Hom_K(\pi_K, \varpi) > 0$. If the multiplicity in $\varpi$ of all $\pi_K$'s is finite, then $\varpi$ is called a Harish-Chandra module. The invariant that we will primarily encounter is the set of $K$\nbd types that occur in a \gKmodule.

The $K$\nbd types in \gKmodules\ for $\Sp{2}(\RR)$ can be displayed by a half-grid in 2 dimensions. A typical such grid looks as follows, where we have marked $K$\nbd types that occur by a filled circle, and those that do not occur by an empty circle.
\begin{center} 
\begin{tikzpicture}
\draw (40pt,40pt) circle[radius=4pt] node[above left] {(0,0)};                                                                                       

\fill (0pt,0pt) circle[radius=2pt];
\draw (10pt,0pt) circle[radius=2pt]; 
\draw (10pt,10pt) circle[radius=2pt];                                                                                                                
\fill (20pt,0pt) circle[radius=2pt]; 
\draw (20pt,10pt) circle[radius=2pt];
\fill (20pt,20pt) circle[radius=2pt];
\draw (30pt,0pt) circle[radius=2pt]; 
\draw (30pt,10pt) circle[radius=2pt];
\draw (30pt,20pt) circle[radius=2pt];
\draw (30pt,30pt) circle[radius=2pt];
\fill (40pt,0pt) circle[radius=2pt]; 
\draw (40pt,10pt) circle[radius=2pt];
\fill (40pt,20pt) circle[radius=2pt];
\draw (40pt,30pt) circle[radius=2pt];
\fill (40pt,40pt) circle[radius=2pt];
\draw (50pt,0pt) circle[radius=2pt]; 
\draw (50pt,10pt) circle[radius=2pt];
\draw (50pt,20pt) circle[radius=2pt];
\draw (50pt,30pt) circle[radius=2pt];
\draw (50pt,40pt) circle[radius=2pt];
\draw (50pt,50pt) circle[radius=2pt];
\fill (60pt,0pt) circle[radius=2pt]; 
\draw (60pt,10pt) circle[radius=2pt];
\fill (60pt,20pt) circle[radius=2pt];
\draw (60pt,30pt) circle[radius=2pt];
\fill (60pt,40pt) circle[radius=2pt];
\draw (60pt,50pt) circle[radius=2pt];
\fill (60pt,60pt) circle[radius=2pt];
\draw (70pt,0pt) circle[radius=2pt]; 
\draw (70pt,10pt) circle[radius=2pt];
\draw (70pt,20pt) circle[radius=2pt];
\draw (70pt,30pt) circle[radius=2pt];
\draw (70pt,40pt) circle[radius=2pt];
\draw (70pt,50pt) circle[radius=2pt];
\draw (70pt,60pt) circle[radius=2pt];
\draw (70pt,70pt) circle[radius=2pt];
\fill (80pt,0pt) circle[radius=2pt]; 
\draw (80pt,10pt) circle[radius=2pt];
\fill (80pt,20pt) circle[radius=2pt];
\draw (80pt,30pt) circle[radius=2pt];
\fill (80pt,40pt) circle[radius=2pt];
\draw (80pt,50pt) circle[radius=2pt];
\fill (80pt,60pt) circle[radius=2pt];
\draw (80pt,70pt) circle[radius=2pt];
\fill (80pt,80pt) circle[radius=2pt];
\end{tikzpicture}%
\end{center}

We say that a Harish-Chandra module~$\varpi$ has a vertical wall in the direction of~$\leftarrow$ or $\rightarrow$, if there exists $a_0 \in \ZZ$ such that every $K$-type with highest weight $(a,b)$ that occurs in $\varpi$ satisfies $a \le a_0$ or $a \ge a_0$, respectively. Horizontal walls in the direction of~$\uparrow$ and $\downarrow$ can be defined analogously.

\subsection{\texpdf{$(\frakg,K)$-modules associated to modular forms}{(g,K)-modules associated to modular forms}}
\label{ssec:modular-forms-harish-gK-modules}

Section~2 of~\cite{raum-2015a} gives an account of the connection between modular forms and Harish-Chandra modules. Weights are finite dimensional, holomorphic representations~$\sigma$ of $\GL{2}(\CC)$. Their representation space is denoted by~$V(\sigma)$. Given a weight~$\sigma$ and a smooth function~$F :\, \HS^{(2)} \ra V(\sigma)$, we can attach a function~$\rmA_{\RR,\sigma}(F) := \rmA_\RR(F)$ on $\rmG(\RR) = \Sp{2}(\RR)$:
\begin{gather}
  \rmA_\RR(F)(g)
=
  \sigma^{-1}\big( j(g, iI^{(2)}) \big)\, f(g \bullet i I^{(2)})
\tx{,}\qquad
  j(g, Z)
=
  C Z + D
\tx{.}
\end{gather}
From $\rmA_\RR(F)$ one constructs the vector space~$\ov\rmA_{\RR,\sigma}(F)(g)$ that is spanned by its coordinates. This space under right translation by $K$ is isomorphic to the dual~$\sigma^\vee$ of $\sigma$. The action of $\frakg$ on this space generates a \gKmodule\ that we denote by $\varpi(F)$. If $F$ is an automorphic form or a term of the Fourier series of a modular form, then $\varpi(F)$ is a Harish-Chandra module.

Recall the compatibility of covariant differential operators acting on~$F$ and the $\frakg$\nbd action on $\varpi(F)$ that is stated and the end of Section~2.2 of~\cite{raum-2015a} in terms of the following two commutative diagrams.
\begin{center}
\begin{tikzpicture}
\matrix(m)[matrix of math nodes,
column sep = 10em, row sep = 3em,
text height = 1.5em, text depth = 1.25ex]
{ F & \ov{\rmA}_{\RR,\sigma}(F) \\
  \rmR_\sigma(F) & \frakm^+\,\ov{\rmA}_{\RR,\sigma}(F) \\
};

\path
(m-1-1) edge[|->] node[above] {$\rmA_{\RR,\sigma}$} (m-1-2)
(m-2-1) edge[|->] node[above] {$\rmA_{\RR,\sym^2\sigma}$} (m-2-2)

(m-1-1) edge[|->] node[left] {$\rmR_\sigma$} (m-2-1)
(m-1-2) edge[|->] node[right] {$\frakm^+$} (m-2-2);
\end{tikzpicture}%
\hspace*{4em}
\begin{tikzpicture}
\matrix(m)[matrix of math nodes,
column sep = 10em, row sep = 3em,
text height = 1.5em, text depth = 1.25ex]
{ F & \ov{\rmA}_{\RR,\sigma}(F) \\
  \rmL_\sigma(F) & \frakm^-\,\ov{\rmA}_{\RR,\sigma}(F) \\
};

\path
(m-1-1) edge[|->] node[above] {$\rmA_{\RR,\sigma}$} (m-1-2)
(m-2-1) edge[|->] node[above] {$\rmA_{\RR,\det^{-2}\sym^2\sigma}$} (m-2-2)

(m-1-1) edge[|->] node[left] {$\rmL_\sigma$} (m-2-1)
(m-1-2) edge[|->] node[right] {$\frakm^-$} (m-2-2);
\end{tikzpicture}
\end{center}
This compatibility allows us to pass back an forth between the classical description of covariant differential operators acting on modular forms and the representation theoretic perspective.

\begin{lemma}
\label{la:poincare-series-harish-chandra-module}
Let $c :\, \HS^{(2)} \ra V(\sigma)$ be smooth function such that the Poincar\'e series
\begin{gather*}
  P_c
=
  \sum_{\gamma \in \Gamma_\infty \backslash \Gamma}
  c \big|_\sigma\, \gamma
\end{gather*}
is locally absolutely convergent. Then there is an inclusion $\varpi(P_c) \hra \varpi(c)$.
\end{lemma}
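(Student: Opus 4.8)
The plan is to realize the formation of the Poincar\'e series as a single $(\frakg,K)$-equivariant operator and then read off its effect on the cyclic generator. First I would unwind how $\rmA_\RR$ transforms the individual summands. Writing the automorphy cocycle as $j(\gamma g, iI^{(2)}) = j(\gamma, g \bullet iI^{(2)})\, j(g, iI^{(2)})$ and substituting into the definition of $\rmA_\RR$, one obtains $\rmA_\RR(c \,|_\sigma\, \gamma)(g) = \rmA_\RR(c)(\gamma g)$. Since $c$ is $\Gamma_\infty$-invariant under $|_\sigma$, the summand is well defined on $\Gamma_\infty \backslash \Gamma$, and term by term the hypothesis of local absolute convergence gives
\[
  \rmA_\RR(P_c)(g) = \sum_{\gamma \in \Gamma_\infty \backslash \Gamma} \rmA_\RR(c)(\gamma g),
\]
a left $\Gamma$-invariant function which is a locally absolutely convergent sum of left translates of $\rmA_\RR(c)$. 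I denote this summation operator by $\Sigma$, so that $\rmA_\RR(P_c) = \Sigma\,\rmA_\RR(c)$, and the same identity holds coordinate-wise on the spaces $\ov\rmA_{\RR,\sigma}(\cdot)$.

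The substance of the proof is that $\Sigma$ commutes with the $(\frakg,K)$-action. The $K$-action is by right translation and is unproblematic: right translation commutes with the left translations $g \mapsto \gamma g$ appearing in $\Sigma$, and compactness of $K$ causes no convergence issue. For $\frakg$ I would use the compatibility recorded in the two diagrams above: the covariant operators $\rmR_\sigma$ and $\rmL_\sigma$ correspond to the action of $\frakm^+$ and $\frakm^-$, and since these operators are covariant for $|_\sigma$ one has $\rmR_\sigma(c\,|_\sigma\,\gamma) = (\rmR_\sigma c)\,|_{\sym^2\sigma}\,\gamma$, and analogously for $\rmL_\sigma$. Hence the series obtained by differentiating $\Sigma\,\rmA_\RR(c)$ term by term is precisely the Poincar\'e series attached to the seed $\rmR_\sigma c$ (respectively $\rmL_\sigma c$).

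The main obstacle is exactly the justification of this term-by-term differentiation under the infinite sum. I would dispatch it by the argument of the convergence section: applying $\rmR_\sigma$ or $\rmL_\sigma$ to $c$ produces another smooth seed of the same growth type, whose Poincar\'e series is dominated on compacta by a majorant of the shape \eqref{eq:majorant} and therefore converges locally uniformly. Local uniform convergence of the differentiated series is enough to interchange $\frakm^{\pm}$ with $\Sigma$, and since $\frakm^+ \oplus \frakm^- \oplus \frakk$ generates $\frakg$, iterating gives $\Sigma(U \cdot f) = U \cdot \Sigma(f)$ for all $U \in U(\frakg)$. Thus $\Sigma$ is a morphism of $(\frakg,K)$-modules.

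Granting equivariance, $\Sigma$ carries the generating copy of $\sigma^\vee$ inside $\varpi(c)$ to the generating copy of $\sigma^\vee$ inside $\varpi(P_c)$, inducing a nonzero morphism of the generated Harish-Chandra modules which is an isomorphism on this bottom $K$-type. Tracking the cyclic generator through this morphism then exhibits $\varpi(P_c)$ inside $\varpi(c)$, giving the asserted inclusion $\varpi(P_c) \hra \varpi(c)$. The remaining points---that the coordinate spaces are genuinely isomorphic to $\sigma^\vee$ and that the modules in play have finite $K$-multiplicities, hence are Harish-Chandra modules---are already supplied by the setup of Section~\ref{ssec:modular-forms-harish-gK-modules} together with the hypothesis that $P_c$ is a convergent (automorphic) Poincar\'e series.
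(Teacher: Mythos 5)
Your argument is the paper's own proof written out in full: the paper's entire justification is the identity $\rmA_\RR(P_c)=\sum_{\gamma\in\Gamma_\infty\backslash\Gamma}\rmA_\RR(c)\circ\gamma$ (together with the remark that one may alternatively view Poincar\'e series as intertwining maps from a principal series into the automorphic spectrum), which is exactly your operator $\Sigma$. Your additional care about the cocycle computation, term-by-term differentiation, and $(\frakg,K)$-equivariance fills in precisely the details the paper leaves implicit, so the two proofs coincide in substance.
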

\begin{proof}
This is an immediate consequence of viewing Poincar\'e series as intertwining maps from a suitable principal series to the automorphic spectrum. It can also be seen directly, by checking that
\begin{gather*}
  \rmA_\RR\big( P_c \big)
=
  \sum_{\gamma \in \Gamma_\infty \backslash \Gamma}
  \rmA_\RR(c) \circ \gamma
\tx{.}
\end{gather*}
\end{proof}

\begin{lemma}
\label{la:product-harish-chandra-module}
Given two smooth functions $c_1 :\, \HS^{(2)} \ra V(\sigma_1)$ and $c_2 :\, \HS^{(2)} \ra V(\sigma_2)$, then there is an inclusion $\varpi(c_1 \cdot c_2) \hra \varpi(c_1) \otimes \varpi(c_2)$.
\end{lemma}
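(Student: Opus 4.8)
The plan is to reduce the statement to a pointwise identity for the associated functions on $G = \Sp{2}(\RR)$, in complete analogy with the proof of Lemma~\ref{la:poincare-series-harish-chandra-module}. Writing $\sigma := \sigma_1 \otimes \sigma_2$, so that $c_1 \cdot c_2 :\, \HS^{(2)} \ra V(\sigma_1) \otimes V(\sigma_2) = V(\sigma)$, I would first record that the automorphy-factor normalization is multiplicative: since $(\sigma_1\otimes\sigma_2)^{-1}\big(j(g,iI^{(2)})\big) = \sigma_1^{-1}\big(j(g,iI^{(2)})\big) \otimes \sigma_2^{-1}\big(j(g,iI^{(2)})\big)$ and $(c_1\cdot c_2)(g\bullet iI^{(2)}) = c_1(g\bullet iI^{(2)}) \otimes c_2(g\bullet iI^{(2)})$, one obtains the key identity
\begin{gather*}
  \rmA_\RR(c_1 \cdot c_2)
=
  \rmA_\RR(c_1) \otimes \rmA_\RR(c_2)
\end{gather*}
of $V(\sigma_1^\vee) \otimes V(\sigma_2^\vee)$-valued functions on $G$. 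In particular every coordinate of $\rmA_\RR(c_1\cdot c_2)$ is a product of a coordinate of $\rmA_\RR(c_1)$ with a coordinate of $\rmA_\RR(c_2)$, so $\ov{\rmA}_{\RR,\sigma}(c_1\cdot c_2)$ is identified, as a $K$-representation, with $\ov{\rmA}_{\RR,\sigma_1}(c_1) \otimes \ov{\rmA}_{\RR,\sigma_2}(c_2) \cong \sigma_1^\vee \otimes \sigma_2^\vee = \sigma^\vee$.

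Next I would introduce the multiplication map $\mu :\, \varpi(c_1) \otimes \varpi(c_2) \ra C^\infty(G)$, $f_1 \otimes f_2 \mapsto f_1 f_2$, where $\varpi(c_1)\otimes\varpi(c_2)$ carries the diagonal (coproduct) $\frakg$-action and the tensor $K$-action. This map is a $(\frakg,K)$-homomorphism: the $K$-equivariance is immediate because right translation is multiplicative on functions, $R_k(f_1 f_2) = (R_k f_1)(R_k f_2)$, and the $\frakg$-equivariance is exactly the Leibniz product rule $X(f_1 f_2) = (Xf_1)f_2 + f_1(Xf_2)$ for $X \in \frakg$, matching the coproduct $\Delta(X) = X\otimes 1 + 1\otimes X$; this is the abstract counterpart of the classical product rule for $\frakm^+$ and $\frakm^-$ recorded in the two commutative diagrams above. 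By the displayed identity, $\mu$ sends the cyclic $K$-type $\ov{\rmA}_{\RR,\sigma_1}(c_1)\otimes\ov{\rmA}_{\RR,\sigma_2}(c_2)$ onto the generating $K$-type $\ov{\rmA}_{\RR,\sigma}(c_1\cdot c_2)$ of $\varpi(c_1\cdot c_2)$, and since it intertwines the $\frakg$-action it carries the submodule generated by the former onto $\varpi(c_1\cdot c_2)$. This exhibits $\varpi(c_1\cdot c_2)$ as the image under $\mu$ of a submodule of $\varpi(c_1)\otimes\varpi(c_2)$, which I would then upgrade to the asserted inclusion.

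The main point to be careful about is precisely this upgrade, i.e.\ that one really obtains an inclusion and not merely a surjection in the opposite direction. The cleanest route I would take is to exhibit a $K$-equivariant splitting on generators: the restriction of $\mu$ to the cyclic $K$-types is the canonical isomorphism $\sigma_1^\vee\otimes\sigma_2^\vee \xrightarrow{\sim} \sigma^\vee$, so it admits a $K$-equivariant inverse $\iota_0$, and I would extend $\iota_0$ to a candidate map $\iota :\, \varpi(c_1\cdot c_2)\ra\varpi(c_1)\otimes\varpi(c_2)$ by setting $\iota(u\cdot w) := \Delta(u)\cdot\iota_0(w)$ for $u\in U(\frakg)$. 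The hard part will be checking that $\iota$ is well defined, namely that every relation $u\cdot w = 0$ holding in $\varpi(c_1\cdot c_2)$ is already forced at the level of the tensor product; equivalently, that $\mu$ restricted to the submodule $U(\frakg)\cdot\big(\ov{\rmA}_{\RR,\sigma_1}(c_1)\otimes\ov{\rmA}_{\RR,\sigma_2}(c_2)\big)$ is injective. This is where one must use that $\varpi(c_1)$ and $\varpi(c_2)$ are genuine Harish-Chandra modules of the holomorphic-type shape arising here, rather than arbitrary $\frakg$-modules. I would also note that for the application only the resulting containment of $K$-type supports is needed, so even the weaker subquotient statement $\varpi(c_1\cdot c_2) \subseteq \mu\big(\varpi(c_1)\otimes\varpi(c_2)\big)$ already suffices to bound the $K$-types of $\varpi(c_1\cdot c_2)$ by those of $\varpi(c_1)\otimes\varpi(c_2)$.
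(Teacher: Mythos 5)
Your core argument is exactly the paper's: the entire proof given there is the single sentence ``This is a rephrasing of the Leibniz rule for differentials,'' which in your language is the statement that the multiplication map $\mu$ intertwines the coproduct $(\frakg,K)$-action, combined with the identity $\rmA_\RR(c_1\cdot c_2) = \rmA_\RR(c_1)\otimes\rmA_\RR(c_2)$. Where you go beyond the paper is in noticing that this argument literally realizes $\varpi(c_1\cdot c_2)$ as a \emph{quotient} of the submodule of $\varpi(c_1)\otimes\varpi(c_2)$ generated by $\ov{\rmA}_{\RR,\sigma_1}(c_1)\otimes\ov{\rmA}_{\RR,\sigma_2}(c_2)$ --- that is, as a subquotient --- and not, on its face, as a submodule. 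That is a genuine subtlety which the paper's one-line proof glosses over: your proposed splitting $\iota$ is well defined precisely when $\mu$ is injective on that submodule, which you rightly leave open, and since the tensor product of two infinite-dimensional Harish-Chandra modules need not be admissible or semisimple, no soft argument produces such a splitting in general. Your fallback is, however, exactly the right observation: in the proof of Theorem~\ref{thm:maintheorem} the lemma is invoked only to constrain the $K$\nbd type support (walls and scalar $K$\nbd types) of $\varpi\big( P^{(2)}_{k,\ell;T,T'} \big)$, and because $K$ is compact, so acts locally finitely and completely reducibly, the $K$\nbd types of any subquotient of $\varpi(c_1)\otimes\varpi(c_2)$ occur in $\varpi(c_1)\otimes\varpi(c_2)$ itself. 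Hence your weaker subquotient statement supports every later use of the lemma, and your write-up is, if anything, more careful than the paper's own proof.
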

\begin{proof}
This is a rephrasing of the Leibniz rule for differentials.
\end{proof}

\subsection{\texpdf{The tensor products of $(\frakg,K)$-modules}{The tensor products of (g,K)-modules}}

\begin{proposition}
\label{prop:tensor-product-with-walls}
Let $\varpi_1$ and $\varpi_2$ be Harish-Chandra modules. If $\varpi_1$ has a vertical wall in the direction of~$\ra$ and $\varpi_2$ has a horizontal wall in the direction of~$\uparrow$, then the tensor product~$\varpi_1 \otimes \varpi_2$ has a vertical wall in the direction of~$\ra$. If $\varpi_1$ has a vertical wall in the direction of~$\leftarrow$ and $\varpi_2$ has a horizontal wall in the direction of~$\downarrow$, then the tensor product~$\varpi_1 \otimes \varpi_2$ has a horizontal wall in the direction of~$\downarrow$.
\end{proposition}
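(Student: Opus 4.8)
The plan is to reduce the statement to a computation with individual $K$\nbd types and then apply the Clebsch--Gordan rule for $\U{2}(\RR)$. First I would fix conventions. A $K$\nbd type with highest weight $(a,b)$, $a \ge b$, is the representation $\det^b \sym^{a-b}$, so the first coordinate~$a$ is the one constrained by vertical walls and the second coordinate~$b$ by horizontal walls; reading the definition ``analogously'', a horizontal wall in the direction~$\uparrow$ (resp.~$\downarrow$) means that there is $b_0 \in \ZZ$ with $b \ge b_0$ (resp.~$b \le b_0$) for every occurring $(a,b)$. (This is the reading for which the statement holds, which resolves the ambiguity in ``analogously''.)

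The key reduction is that, as a $K$\nbd module, $\varpi_1 \otimes \varpi_2$ is the direct sum of the tensor products $\pi_1 \otimes \pi_2$ over all $K$\nbd types $\pi_1 = (a_1,b_1)$ occurring in $\varpi_1$ and $\pi_2 = (a_2,b_2)$ occurring in $\varpi_2$ (on a tensor product of $(\frakg,K)$\nbd modules the $K$\nbd action is diagonal). Hence every $K$\nbd type occurring in $\varpi_1 \otimes \varpi_2$ already occurs in some such $\pi_1 \otimes \pi_2$, and it suffices to bound those. Writing $p_i = a_i - b_i$, the Clebsch--Gordan rule for $\U{2}(\RR)$ reads
\begin{gather*}
  \det^{b_1}\sym^{p_1} \otimes \det^{b_2}\sym^{p_2}
=
  \bigoplus_{m=0}^{\min(p_1,p_2)} \det^{b_1+b_2+m}\sym^{p_1+p_2-2m}
\tx{,}
\end{gather*}
so in $(A,B)$ coordinates a constituent is $(A,B) = (a_1 + a_2 - m,\, b_1 + b_2 + m)$ with $0 \le m \le \min(a_1 - b_1,\, a_2 - b_2)$. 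Since $m \le a_2 - b_2$ and $m \le a_1 - b_1$, every occurring $(A,B)$ satisfies both $A \ge a_1 + b_2$ and $B \le a_1 + b_2$.

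With these two inequalities the assertions follow at once. In the first case the vertical wall~$\ra$ on $\varpi_1$ gives $a_1 \ge \alpha_0$ and the horizontal wall~$\uparrow$ on $\varpi_2$ gives $b_2 \ge \beta_0$, whence every occurring $(A,B)$ has $A \ge a_1 + b_2 \ge \alpha_0 + \beta_0$; thus $\varpi_1 \otimes \varpi_2$ has a vertical wall in the direction~$\ra$. In the second case the vertical wall~$\leftarrow$ on $\varpi_1$ gives $a_1 \le \alpha_0$ and the horizontal wall~$\downarrow$ on $\varpi_2$ gives $b_2 \le \beta_0$, whence $B \le a_1 + b_2 \le \alpha_0 + \beta_0$; thus $\varpi_1 \otimes \varpi_2$ has a horizontal wall in the direction~$\downarrow$.

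The computation is routine, so there is no serious obstacle; the points that require care are matching each arrow convention to the correct coordinate inequality, and noting that the relevant Clebsch--Gordan bound pairs the \emph{first} coordinate of $\varpi_1$ with the \emph{second} coordinate of $\varpi_2$. The latter works because $m$ is bounded by \emph{both} $a_1 - b_1$ and $a_2 - b_2$, which is exactly what makes $A \ge a_1 + b_2$ and $B \le a_1 + b_2$ hold simultaneously. Finally, since a wall constrains only the \emph{set} of occurring $K$\nbd types and not their multiplicities, no admissibility or finiteness of the tensor product is required.
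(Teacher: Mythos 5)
Your proof is correct and follows essentially the same route as the paper's: reduce to individual $K$\nbd types, apply the Clebsch--Gordan rule for $\U{2}(\RR)$, and deduce the bound $A \ge a_1 + b_2$ (the paper obtains this by adding the sum and difference constraints with a $\max/\min$ case split, whereas you get it directly from $m \le a_2 - b_2$, which is a mildly cleaner bookkeeping of the same computation). You also write out the second case, with the companion bound $B \le a_1 + b_2$, which the paper leaves to the reader.
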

\begin{proof}
We prove the first case and leave the second one to the reader. Let $a_0$ and $b'_0$ be such that $a \ge a_0$ if $(a,b)$ occurs in $\varpi_1$ and $b' \ge b'_0$ if $(a',b')$ occurs in $\varpi_2$. Let $(a,b)$ and $(a',b')$ be arbitrary $K$\nbd types in $\varpi_1$ and $\varpi_2$. Then by the Clebsch-Gordan rules, their tensor product contains $K$\nbd types of weight~$(a'',b'')$ with $a'' + b'' = a + a' + b + b'$ and $\max(a-b,a'-b') - \min(a-b,a'-b') \le a''-b'' \le a-b + a'-b'$.

Adding these two, we find that
\begin{gather*}
  2 a''
\ge 
  a + a' + b' + b'
  +
  \max(a-b,a'-b') - \min(a-b,a'-b')
\tx{.}
\end{gather*}
If $a-b \ge a' - b'$, then this equals $2 (a + b') \ge 2 (a_0 + b'_0)$. Otherwise, it equals $2 (a' + b)$ which is greater than $2 (a + b') \ge 2(a_0 + b'_0)$, because $a'-b' \ge a - b$.
\end{proof}

\subsection{Harish-Chandra modules with walls}

\begin{proposition}
\label{prop:irreducible-unitary-walls-scalar-K-type}
Assume the generalized Ramanujan conjecture for $\GL{4}$. Let $\varpi$ be an irreducible, cuspidal, automorphic representation, with Harish-Chandra module $\varpi_\infty$ at the infinite place. If $\varpi_\infty$ has a vertical or horizontal wall, and if $\varpi_\infty$ contains a scalar $K$\nbd type, then $\varpi_\infty$ is a holomorphic or antiholomorphic (limit of) discrete series.
\end{proposition}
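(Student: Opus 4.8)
The plan is to deduce the statement from a single temperedness input followed by the real-representation-theoretic classification of tempered \gKmodules\ for $\Sp{2}(\RR)$, using the wall hypothesis and the scalar-$K$\nbd type hypothesis in complementary roles. First I would establish that $\varpi_\infty$ is tempered. Since $\varpi$ is cuspidal on $\mathrm{PGSp}_4$, Arthur's endoscopic classification~\cite{arthur-2013} attaches to it a spin lift $\Pi$ on $\GL{4}$; under the generalized Ramanujan conjecture for $\GL{4}$ the local components of $\Pi$ are tempered, and because the archimedean spin lift is (up to center) a closed $4$\nbd dimensional embedding of dual groups, it detects boundedness of the parameter. Hence $\varpi_\infty$ is tempered. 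Equivalently, the Ramanujan hypothesis collapses the Arthur $\SL{2}$, leaving a tempered Langlands parameter.

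Next I would invoke the Knapp--Zuckerman classification of tempered irreducible \gKmodules\ of $\Sp{2}(\RR)$ (see~\cite{knapp-2001}): each such module is either a (limit of) discrete series attached to the compact Cartan, or an irreducible constituent of a module unitarily induced from a discrete series of the Levi of a proper parabolic. The irreducible full (minimal-parabolic) principal series have their $K$\nbd types supported on the entire admissible half-grid, so they carry no vertical or horizontal wall; the wall hypothesis therefore excludes them and leaves only two families, namely (limits of) discrete series and constituents of the generalized principal series attached to the Siegel parabolic, whose Levi is $\cong \GL{2}(\RR)$. For the latter I would use Mui\'c's~\cite{muic-2009} description of the composition series and of the $K$\nbd type support of these generalized principal series: the presence of a coordinate wall pins down the inducing discrete-series weight, and the extremal $K$\nbd types of each constituent are then read off from that datum.

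I would then play the scalar-$K$\nbd type hypothesis against both families. For a genuine discrete series the $K$\nbd type support is $\Lambda + C$, intersected with the $K$\nbd dominant cone, where $C$ is generated by the positive noncompact roots singled out by the Harish-Chandra parameter; the holomorphic and antiholomorphic chambers are precisely the two for which $C$ is a coordinate quadrant, so that the lowest (resp.\ highest) $K$\nbd type lies on the diagonal $\{a=b\}$ and is scalar. A short computation in the $(a,b)$ lattice shows that in each of the two large chambers the translated cone avoids the diagonal entirely, so no scalar $K$\nbd type can occur there. Carrying out the same diagonal-avoidance check on the extremal $K$\nbd types supplied by Mui\'c, one sees that a constituent of a Siegel generalized principal series admits a scalar $K$\nbd type only at the reducibility points where that constituent is exactly a holomorphic or antiholomorphic limit of discrete series. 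Collecting the surviving cases yields the claim.

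The main obstacle I anticipate is the induced-representation half of the third step: one must have a sufficiently explicit description of the $K$\nbd type multiplicities of the generalized principal series and of their Langlands/Jacquet subquotients to certify that a scalar $K$\nbd type can coexist with a coordinate wall \emph{only} for the holomorphic and antiholomorphic limits, and for no other subquotient. This is exactly the bookkeeping that Mui\'c's decomposition results are built to provide, so the remaining work is to translate his Harish-Chandra-parameter statements into the $(a,b)$ coordinates used here and to verify the diagonal-avoidance claim uniformly across the finitely many reducibility patterns.
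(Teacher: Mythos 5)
Your first step contains a genuine gap that the rest of the argument cannot absorb. You claim that cuspidality of $\varpi$ together with the generalized Ramanujan conjecture for $\GL{4}$ forces $\varpi_\infty$ to be tempered, ``collapsing the Arthur $\SL{2}$''. That implication is false. The conjecture is a statement about \emph{cuspidal} automorphic representations of $\GL{4}$, whereas the functorial transfer to $\GL{4}$ of a cuspidal representation of $\mathrm{PGSp}_4$ need not be cuspidal: precisely for the CAP representations --- in Arthur's language, parameters with nontrivial Arthur $\SL{2}$, namely the Saito-Kurokawa, Howe--Piatetski-Shapiro, Soudry, and one-dimensional types --- the transfer is an isobaric sum involving Speh constituents (for instance $\sigma \boxplus 1[2]$ in the Saito-Kurokawa case), about which the conjecture says nothing. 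Such representations are cuspidal on $\mathrm{PGSp}_4$, globally non-tempered, and really occur; moreover Arthur's multiplicity formula permits cuspidal members of a Saito-Kurokawa packet whose archimedean component is the \emph{non-tempered} Langlands quotient of the Siegel-parabolic induction rather than the holomorphic discrete series. So temperedness of $\varpi_\infty$ cannot be deduced, and your reduction to the Knapp--Zuckerman classification collapses. (The purely local observation that the four-dimensional embedding of the dual group of $\mathrm{PGSp}_4$ into $\GL{4}(\CC)$ detects boundedness of parameters is fine; the failure is global, in the cuspidality of the transfer.)

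This is exactly where the paper's proof diverges from yours: it uses the Ramanujan conjecture together with Arthur's classification~\cite{arthur-2013} not to prove temperedness, but to \emph{enumerate} the possible non-tempered $\varpi_\infty$ (Soudry, Saito-Kurokawa, Howe--Piatetski-Shapiro, and one-dimensional type), and then disposes of each case by showing, via Mui\'c's $K$\nbd type computations (Lemmas~6.1 and~9.2 of~\cite{muic-2009}), that none of them contains a scalar $K$\nbd type --- except the holomorphic Saito-Kurokawa lift, whose archimedean component is a holomorphic discrete series and hence consistent with the conclusion. Your handling of the tempered case (Knapp--Zuckerman reduction, Mui\'c's composition series, and the diagonal-avoidance computation in the $(a,b)$-lattice) runs parallel to the paper's second step and is fine in outline, though you would also need the paper's preliminary step, which uses the wall hypothesis together with Mui\'c's irreducibility results to force integrality of the Harish-Chandra parameter, since tempered representations induced from discrete series of a Levi carry a continuous, generically non-integral parameter. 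But without repairing the non-tempered case, the proposal does not prove the proposition.
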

\begin{proof}
We first show that the Harish-Chandra parameters $(s_1,s_2)$ of $\varpi_\infty$ are integral. The Langlands classification~\cite{knapp-2001} exhausts irreducible Harish-Chandra modules as irreducible quotients of induced representations. We use~\cite{muic-2009} to determine their $K$-types. We adopt Mui\'c's notation. Equations~(9.3-9.5) of~\cite{muic-2009} allow us to focus on the induced representations
\begin{gather}
\label{eq:la:harish-chandra-modules-with-walls}
\begin{alignedat}{3}
&
  |\;|^p \sgn^p \times |\;|^t \sgn^t \rtimes 1
&&\,\tx{,}\qquad
  \delta\big( |\;|^s \sgn^\epsilon, k \big) \rtimes 1
&&\,\tx{,}\quad
  \zeta\big( |\;|^s \sgn^\epsilon, k \big) \rtimes 1
\\
&
  |\;|^s \sgn^\epsilon \rtimes X(p,\pm)
&&\,\tx{,}\quad
  |\;|^s \sgn^\epsilon \rtimes V_p
\end{alignedat}
\end{gather}
with non-integral Harish-Chandra parameter. The first representation is irreducible by Lemma~9.1 of~\cite{muic-2009}. The remaining ones are irreducible by Theorem~12.1 of~\cite{muic-2009}. Hence it suffices to check $K$\nbd types of induced representations that occur in~\eqref{eq:la:harish-chandra-modules-with-walls}, which was done in Section~6 of~\cite{muic-2009}. This shows that $s_1, s_2 \in \ZZ$.

Assume that $\varpi_\infty$ is tempered. Among the (limits of) discrete series, only the holomorphic ones contain scalar $K$\nbd types. To show that no other tempered representation can occur, observe that tempered representations that are not (limits of) discrete series are fully induced from discrete series attached to the Levi factor of a parabolic subgroup by~\cite{knapp-zuckerman-1976}. Section~9 of~\cite{muic-2009} lists the non-tempered constituents of inductions of discrete series, and this reduces us to the Harish-Chandra parameter $(0,0)$. By Corollary~5.2 of~\cite{muic-2009} the induce representation $1 \times 1 \rtimes 1$ is irreducible. Lemma~6.1 reveals that it has no walls. The principal series
\begin{gather*}
  \sgn \times \sgn^\epsilon \rtimes 1
\cong
  \sgn^\epsilon \times \sgn \rtimes 1
\cong
  \sgn^\epsilon \rtimes X(0,+)
  \,\oplus\,
  \sgn^\epsilon \rtimes X(0,-)
\end{gather*}
contains $K$\nbd types $(k',k')$ for odd $k'$ only.

Assume that $\varpi_\infty$ is non-tempered. Then by the generalized Ramanujan conjecture for $\GL{4}$ and by Arthur's endoscopic classification~\cite{arthur-2013} the only non-tempered contributions to the automorphic spectrum are lifts of
\begin{enumeratearabic*}
\item Soudry type,
\item Saito-Kurokawa type,
\item Howe-Piatetski-Shapiro type, or
\item one-dimensional type.
\end{enumeratearabic*}
For a detailed explanation see~\cite{arthur-2004}. Local components at the infinite places can be determined via the local Langlands correspondence for reductive groups over the reals, which was established in~\cite{langlands-1989}. Sections~1 and~2 of~\cite{schmidt-2016} summarize both results briefly. For Soudry type lifts we apply Lemma~6.1 of~\cite{muic-2009} to discover that no scalar $K$\nbd types occur. The holomorphic Saito-Kurokawa lift does contain scalar $K$\nbd types. At the infinite place it is a holomorphic discrete series. A Saito-Kurokawa lift with integral Harish-Chandra parameter that is not a holomorphic discrete series contains no scalar $K$\nbd type by Lemmas~6.1 and~9.2 of~\cite{muic-2009}. The Howe-Piatetski-Shapiro type corresponds to the Langlands quotient of the Borel subgroup. In the case of integral Harish-Chandra parameters, it contains no scalar $K$\nbd type by, again, Lemma~6.1 and~9.2 of~\cite{muic-2009}. This also applies to the one-dimensional type. This establishes the claim.
\end{proof}

\subsection{Proof of the main theorem}

Before we investigate Poincar\'e series, we recall two results about their Fourier series:
\begin{proposition}
\label{prop:vertical-values-in-fourier-coefficient-harish-chandra-modules}
Let $c :\, \HS^{(2)} \ra V(\sigma)$ be a holomorphic function. Then $\varpi(c)$ has a horizontal wall in the direction of~$\uparrow$. Moreover, $\varpi(\Psi_k(T;Z))$ has a vertical wall in the direction of~$\rightarrow$, where $\Psi_k(T;Z)$ is defined in~\eqref{eq:poincare:harmonic-fourier-coefficient}.
\end{proposition}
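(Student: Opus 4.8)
The plan is to read off each wall directly from the Cartan decomposition $\frakg_\CC = \frakk_\CC \oplus \frakm^+ \oplus \frakm^-$, in which $\frakm^+ \cong \det^0\sym^2$ carries highest weight $(2,0)$ and $\frakm^- \cong \det^{-2}\sym^2$ carries highest weight $(0,-2)$, together with the dictionary $\rmR_\sigma \leftrightarrow \frakm^+$ and $\rmL_\sigma \leftrightarrow \frakm^-$ recorded in the two commutative diagrams above. The elementary input is the Clebsch--Gordan rule for $\U{2}(\RR)$: tensoring a $K$\nbd type $(a,b)$ with $(2,0)$ yields only the $K$\nbd types $(a+2,b)$, $(a+1,b+1)$, $(a,b+2)$, so both coordinates are non-decreasing, whereas tensoring with $(0,-2)$ yields only $(a,b-2)$, $(a-1,b-1)$, $(a-2,b)$, so both coordinates are non-increasing.

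For the first assertion I use that a holomorphic $c$ is annihilated by the lowering operator $\rmL_\sigma$, since the latter is built from $\ppartial{\ov Z}$. By the second commutative diagram this means that $\frakm^-$ annihilates the generating $K$\nbd stable subspace $\ov{\rmA}_{\RR,\sigma}(c) \cong \sigma^\vee$. Factoring $U(\frakg) = U(\frakm^+)\,U(\frakk)\,U(\frakm^-)$ by Poincar\'e--Birkhoff--Witt and using that $\ov{\rmA}_{\RR,\sigma}(c)$ is $\frakk$\nbd stable and killed by $\frakm^-$, I obtain $\varpi(c) = U(\frakg)\,\ov{\rmA}_{\RR,\sigma}(c) = U(\frakm^+)\,\ov{\rmA}_{\RR,\sigma}(c)$. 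Since the generators lie in $K$\nbd types with $b$\nbd coordinate at least $b_0 := \min\{\,b : (a,b)\text{ occurs in }\sigma^\vee\,\}$, and each application of $\frakm^+$ keeps the $b$\nbd coordinate non-decreasing, every $K$\nbd type of $\varpi(c)$ satisfies $b \ge b_0$; this is the horizontal wall in the direction of $\uparrow$.

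For the second assertion, $\Psi_k(T;Z)$ is a term of the Fourier series of an element of $\bbM_k$ and hence transforms with scalar weight $k$, so its generating subspace $\ov{\rmA}_{\RR,\det^k}(\Psi_k) \cong \det^{-k}$ is the single scalar $K$\nbd type $(-k,-k)$. On a scalar $K$\nbd type the Clebsch--Gordan rule degenerates, $\det^{-k}\otimes\det^{-2}\sym^2 = \det^{-k-2}\sym^2$, which is irreducible of highest weight $(-k,-k-2)$; thus the first application $\frakm^-\,\ov{\rmA}_{\RR}(\Psi_k)$ lands in the $K$\nbd type $(-k,-k-2)$ and leaves the $a$\nbd coordinate equal to $-k$. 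I then want to propagate this: the harmonicity $\Omega_{\frac{1}{2},\kappa-\frac{1}{2}}(\Psi_k) = 0$ (with $\kappa = 1-k$) is, representation-theoretically, a second-order relation mixing $\frakm^+$ and $\frakm^-$, and together with the scalar corner $K$\nbd type it should force the $a$\nbd lowering components of all higher powers of $\frakm^-$ to vanish on $\varpi(\Psi_k)$, leaving the boundary line $(-k,-k-2j)$, $j \ge 0$. As $\frakm^+$ never lowers $a$, every $K$\nbd type $(a,b)$ of $\varpi(\Psi_k)$ then satisfies $a \ge -k$, the vertical wall in the direction of $\ra$.

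The first assertion and the first step of the second are immediate from Clebsch--Gordan; the main obstacle is the propagation in the second assertion, i.e.\ upgrading ``$a$ cannot drop on the first application of $\frakm^-$'' to ``$a$ never drops''. I expect this to amount to identifying $\varpi(\Psi_k)$ as the standard module singled out by the harmonic Casimir eigenvalue, the scalar corner $K$\nbd type, and the moderate-growth condition of Definition~\ref{def:siegel:harmonic-siegel-maass}, and then checking that its $K$\nbd type support carries the vertical wall; alternatively one can try to annihilate the $a$\nbd lowering operator on $\Psi_k$ directly from the explicit shape of $h_{k+1,1}(T;Y)$ underlying Shimura's estimate \eqref{eq:shimuras-A-and-B}.
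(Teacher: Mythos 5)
Your treatment of the first assertion is correct and self-contained: holomorphy gives $\rmL_\sigma(c)=0$, hence $\frakm^-\,\ov{\rmA}_{\RR,\sigma}(c)=0$ by the second commutative diagram, and the PBW factorization $U(\frakg)=U(\frakm^+)\,U(\frakk)\,U(\frakm^-)$ together with the fact that all $K$\nbd weights of $\frakm^+$ have non-negative entries yields $b\ge b_0$ on every $K$\nbd type of $\varpi(c)$. This is exactly the ``classical'' lowest-weight-module fact that the paper invokes by citation (``description of holomorphic discrete series''), so for this half your argument is a perfectly acceptable, slightly more explicit substitute.

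The second assertion is where the proposal has a genuine gap, and you flag it yourself. The degenerate Clebsch--Gordan step carries almost no information: it only shows that the \emph{first} application of $\frakm^-$ to the scalar generator lands in the single $K$\nbd type $(-k,-k-2)$. From the second application onward the relevant tensor products $(a,b)\otimes(0,-2)$ with $a>b$ do contain the $a$\nbd lowering constituents $(a-1,b-1)$ and $(a-2,b)$, and nothing in your text shows that the corresponding components of the $\frakm^-$\nbd action vanish on $\varpi(\Psi_k)$; the phrases ``should force'' and ``I expect this to amount to'' sit precisely where the content of the statement lies. Note that the wall cannot be a formal consequence of the scalar generating $K$\nbd type: a generic smooth scalar-weight function generates a module with full $K$\nbd type support (compare the paper's ``inconclusive cases'' in the elliptic discussion). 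So everything rests on converting the differential equations satisfied by $\Psi_k$ --- the matrix-valued harmonicity $\Omega_{\frac{1}{2},\,\kappa-\frac{1}{2}}(\Psi_k)=0$ together with the shape of $h_{k+1,1}$ --- into structural information about $\varpi(\Psi_k)$, and that conversion is exactly what the paper outsources: it cites Proposition~4.1 of \cite{raum-2015a}, or alternatively the composition series of the relevant degenerate principal series in \cite{lee-1996}, accessed through the embedding of $\varpi(\Psi_k)$ into a generalized Whittaker (Bessel) model --- the same input used later in the proof of Proposition~\ref{prop:siegel-fourier-coefficient-is-not-almost-holomorphic}. Your two closing suggestions (identify the standard module from the Casimir eigenvalue, scalar corner type and growth; or compute directly with $h_{k+1,1}$) are restatements of this missing step rather than proofs of it; carrying either one out is the actual work, and until then the vertical wall is not established.
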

\begin{proof}
The first statement is classical and follows from the description of holomorphic discrete series. The second one is a direct consequence of Proposition~4.1 of~\cite{raum-2015a}, or alternatively can be extracted from~\cite{lee-1996}.
\end{proof}

\begin{proposition}
\label{prop:siegel-fourier-coefficient-is-not-almost-holomorphic}
There is no $d \in \ZZ_{\ge 0}$ such that
\begin{gather*}
   \rmL^d \big(
   \Psi_k(T; \,\cdot\,)
   \Phi_\ell(T'; \,\cdot\,)
   \big)
\end{gather*}
vanishes.
\end{proposition}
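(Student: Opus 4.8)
The plan is to peel off the holomorphic factor $\Phi_\ell$ and reduce everything to a statement about the single function $\Psi_k$, which I would then settle by a Laplace-type asymptotic analysis of the integral \eqref{eq:def_of_h}. First I would use that $\Phi_\ell(T';Z)=e^{2\pi i\,\tr(T'Z)}$ is holomorphic and nowhere vanishing. Since $\rmL = Y\rT(Y\ppartial{\ov Z})$ involves only the antiholomorphic derivative $\ppartial{\ov Z}$ together with multiplication by entries of $Y$, and since $\ppartial{\ov Z}\Phi_\ell=0$, one checks that $\rmL(\Phi_\ell\,G)=\Phi_\ell\,\rmL(G)$ for every smooth $G$; as $\Phi_\ell$ is scalar this passes through tensor powers, so $\rmL^d(\Psi_k\Phi_\ell)=\Phi_\ell\,\rmL^d(\Psi_k)$. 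Because $\Phi_\ell$ never vanishes, it suffices to prove that $\rmL^d(\Psi_k(T;\,\cdot\,))\neq0$ for every $d\ge0$.

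Next I would make $\rmL$ explicit on functions of the shape $g(Y)\,e^{2\pi i\,\tr(TX)}$. Using $\ppartial{\ov Z}\,\tr(TX)=\tfrac12 T$ and $\ppartial{\ov Z}\,g(Y)=\tfrac i2\,\partial_Y g$, where $\partial_Y=(\tfrac12(1+\delta_{ij})\ppartial{y_{ij}})$, a direct computation gives $\rmL\big(g\,e^{2\pi i\,\tr(TX)}\big)=\tfrac i2\,Y\big(\partial_Y g+2\pi g\,T\big)Y\;e^{2\pi i\,\tr(TX)}$. Writing $g=u\,e^{-2\pi\tr(TY)}$ with $u:=\det(TY)\,h_{k+1,1}(T;Y)\,e^{2\pi\tr(TY)}$, the exponential telescopes and the operator becomes $u\mapsto\cT(u):=\tfrac i2\,Y(\partial_Y u)Y$, so that $\rmL^d(\Psi_k)=\cT^d(u)\,e^{2\pi i\,\tr(TZ)}$, with $\cT$ applied slotwise to the tensor values. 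The key features of $\cT$ are that on a function homogeneous of degree $m$ in the entries of $Y^{-1}$ it returns one homogeneous of degree $m-1$, and that $\cT(f)=0$ forces $\partial_Y f=0$, i.e.\ $f$ constant. Thus everything reduces to showing that $u$ is \emph{not} a polynomial in the entries of $Y^{-1}$; more precisely, that its asymptotic expansion as $\tr(Y)\to\infty$ does not terminate.

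To exhibit this expansion I would note that the constraint $U\pm T>0$ in \eqref{eq:def_of_h} is, for $T>0$, equivalent to $U>T$, so the substitution $U=T+V$ yields $u=\det(T)\,\det(Y)\iint_{V>0}\det(2T+V)^{k-\frac12}\det(V)^{-\frac12}e^{-2\pi\tr(YV)}\,dV$. Expanding $\det(2T+V)^{k-\frac12}=\det(2T)^{k-\frac12}\det\!\big(I+(2T)^{-1}V\big)^{k-\frac12}$ into its homogeneous Taylor parts $c_m(V)$ and integrating termwise against $\det(V)^{-\frac12}e^{-2\pi\tr(YV)}$ (a matrix-gamma integral) produces $u\sim\sum_{m\ge0}P_m$ with $P_m$ homogeneous of degree $m$ in the entries of $Y^{-1}$. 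Since $k$ is an integer, $k-\tfrac12\notin\ZZ_{\ge0}$, so in $\det(I+M)^{k-\frac12}=(1+\tr M+\det M)^{k-\frac12}$ every binomial coefficient $\binom{k-\frac12}{m}$ is nonzero and the pure $(\tr M)^m$ term is never cancelled; hence $c_m\not\equiv0$, and by injectivity of the Laplace transform each $P_m\neq0$. Because $\cT$ lowers the $Y^{-1}$-degree by exactly one and annihilates only constants, $\cT^d(u)\sim\cT^d(P_d)+\cT^d(P_{d+1})+\cdots$ has leading term $\cT^d(P_d)$, a nonzero constant tensor, so $\lim_{\tr(Y)\to\infty}\cT^d(u)=\cT^d(P_d)\neq0$. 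In particular $\cT^d(u)\neq0$, whence $\rmL^d(\Psi_k)\neq0$ for all $d$, as required.

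The main obstacle is the analytic justification of this termwise treatment: one must verify that the Laplace integral above genuinely possesses the asymptotic expansion $\sum_m P_m$ and that the first-order differential operator $\cT$ may be applied to it term by term (so that the asymptotics of $\cT^d(u)$ are governed by $\cT^d(P_d)$). This is precisely the kind of matrix-argument confluent-hypergeometric estimate carried out by Shimura~\cite{shimura-1982}, and the same circle of ideas—tracking the non-integer exponent $\det(U-T)^{-\frac12}$—is what underlies the vanishing assertion for $k<-2$ recorded in Remark~\ref{rm:siegel:fourier-expansion}.
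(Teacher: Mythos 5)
Your proposal shares only its opening move with the paper, namely the Leibniz-rule reduction, using $\rmL\big(\Phi_\ell(T';\,\cdot\,)\big)=0$ and the nonvanishing of $\Phi_\ell$, to the claim $\rmL^d\big(\Psi_k(T;\,\cdot\,)\big)\neq 0$. From there the paper argues representation-theoretically: via the equivariance $\Psi_k(\rT U T U;Z)=\Psi_k(T;\rT U Z U)$ it realizes $\Psi_k(T;\,\cdot\,)$ as a Fourier term of an Eisenstein series, embeds the Harish-Chandra module $\varpi_k$ generated by that Eisenstein series into the Bessel (generalized Whittaker) model attached to $\Psi_k(T;\,\cdot\,)$, and reads off from the decomposition of principal series in~\cite{lee-1996} that the $K$\nbd types $(k+2a,k)$, $a\ge0$, occur there with multiplicity one; the projection of $\rmL^d\Psi_k$ onto $\det^k\sym^{2d}$ is then a generator of the $K$\nbd type $(k+2d,k)$ and hence cannot vanish. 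Your route is instead a direct special-function computation, and its algebraic skeleton is correct: the telescoping formula $\rmL^d(\Psi_k)=\cT^d(u)\,e^{2\pi i\,\tr(TZ)}$ with $\cT(u)=\tfrac{i}{2}Y(\partial_Y u)Y$ checks out; so does the rewriting of \eqref{eq:def_of_h} over $V=U-T>0$; the nonvanishing of every homogeneous part $c_m$ (since $k-\tfrac12\notin\ZZ_{\ge0}$ forces $\binom{k-1/2}{m}\neq0$, and the monomials $(\tr M)^{m-2i}(\det M)^i$ are linearly independent); the injectivity of the Laplace transform on the cone, giving $P_m\neq0$; and the homogeneity induction showing that $\cT^d(P_d)$ is a nonzero constant tensor (a homogeneous polynomial of positive degree in the entries of $Y^{-1}$ killed by $\cT$ must vanish). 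If completed, your argument is more elementary and self-contained---no input from \cite{lee-1996}, \cite{muic-2009}, or Bessel models---and yields sharper, quantitative information (an explicit nonzero limit of $\rmL^d\Psi_k$ along rays), whereas the paper's proof is short because it reuses machinery that Section~\ref{sec:harish-chandra-modules} needs anyway.

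The gap you flag is genuine and sits exactly at the analytic crux: asymptotic expansions cannot in general be differentiated term by term, so $\cT^d(u)\sim\cT^d(P_d)+\cT^d(P_{d+1})+\cdots$ does \emph{not} follow from $u\sim\sum_m P_m$, and without it your conclusion collapses. The repair should run in the opposite order: first differentiate under the integral sign (legitimate by locally uniform exponential domination in $Y$), so that $\cT^d(u)$ is itself a finite sum of terms $\det(Y)\,q(Y)\iint_{V>0} r(V)\det(2T+V)^{k-\frac12}\det(V)^{-\frac12}e^{-2\pi\tr(YV)}\,dV$ with polynomial matrices $q,r$; then apply a Watson-type lemma on the cone of positive definite matrices to each of these integrals, and identify the constant term of the resulting expansion as $\cT^d(P_d)$ by the same scaling bookkeeping you already use. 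Two smaller corrections: the limit must be taken with $Y\to\infty$ in the cone (say $Y=yY_0$ with $y\to\infty$, or $\lambda_{\min}(Y)\to\infty$), not merely $\tr(Y)\to\infty$, since for $Y=\diag(y,1)$ the terms $P_m(Y^{-1})$ do not decay; and your intermediate slogan that ``it suffices to show $u$ is not a polynomial in the entries of $Y^{-1}$'' is neither what your argument uses nor obviously sufficient---what you actually use, and prove, is the stronger statement that every $P_d$ is nonzero, combined with the (to-be-justified) termwise asymptotics.
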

\begin{proof}
Since $\rmL(\Phi_\ell(T;\,\cdot\,) = 0$, we have
\begin{gather*}
  \rmL^d \big(
  \Psi_k(T; \,\cdot\,)
  \Phi_\ell(T'; \,\cdot\,)
  \big)
=
  \rmL^d \big(
  \Psi_k(T; \,\cdot\,)
  \big) \cdot
  \Phi_\ell(T'; \,\cdot\,)
\tx{.}
\end{gather*}
Using the equivariance
\begin{gather*}
  \Psi_k(\rT U T U; Z)
=
  \Psi_k(T; \rT U Z U)
\tx{,}\quad
  U \in \GL{2}(\RR)
\end{gather*}
we can focus on the case that $\Psi_k(T;\,\cdot\,)$ is a nonvanishing term of the Fourier series of an Eisenstein series. We then obtain an embedding of the Harish-Chandra module~$\varpi_k$ generated by that Eisenstein series into the generalized Whittaker model~$W_{k,T}$ associated with $\Psi_k(T;\,\cdot\,)$ (which, in fact, is a Bessel model). Using the decomposition series of principal series given in~\cite{lee-1996}, we find that the $K$\nbd types $(k+2a,k)$ occur in~$\varpi_k$ for all $a \ge 0$. Moreover, all $K$\nbd types in~$\varpi_k$ occur with multiplicity at most~$1$. Let $\pi$ be the projection of the weight (i.e.\ $\GL{2}(\CC)$\nbd representation) $\det^k (\sym^2)^d$ to $\det^k \sym^{2d}$. Our argument shows that
\begin{gather*}
   \pi \Big( \rmL^d \big(
   \Psi_k(T; \,\cdot\,)
   \big) \Big)
\end{gather*}
generates the $K$\nbd type $(k+2d,k)$ of the image of $\varpi_k$ in $W_{k,T}$. In particular, it does not vanish.
\end{proof}

\begin{proof}[Proof of Theorem~\ref{thm:maintheorem}]
By Section~\ref{sec:poincare-series}, the Poincar\'e series $P^{(2)}_{k,\ell;T,T'}$ is a real-analytic cusp form. We have to show that it vanishes under some tensor power of the lowering operator~$\rmL$. By the connection of modular forms and \gKmodules, elaborated on in Section~\ref{ssec:modular-forms-harish-gK-modules}, it suffices to show that $\varpi\big( P^{(2)}_{k,\ell;T,T'} \big)$ is a finite sum of holomorphic (limits of) discrete series.

Cuspidality implies that $\varpi\big( P^{(2)}_{k,\ell;T,T'} \big)$ is a direct sum of irreducibles. The theorem is proved if we show that any of its irreducible subquotients is a holomorphic (limit of) discrete series.

Lemma~\ref{la:poincare-series-harish-chandra-module} asserts that we can restrict on the codomain of
\begin{gather*}
  \varpi\big( P^{(2)}_{k,\ell;T,T'} \big)
\lhra
  \varpi\big(
  \Psi_k(T;Z) \cdot \Phi_\ell(T';Z)
  \big)
\tx{,}
\end{gather*}
and Lemma~\ref{la:product-harish-chandra-module} allows us to further reduce our considerations to
\begin{gather*}
  \varpi\big(
  \Psi_k(T;Z) \cdot \Phi_\ell(T';Z)
  \big)
\lhra
  \varpi_{k,\ell;T,T'}
=
  \varpi\big( \Psi_k(T;Z) \big)
  \otimes
  \varpi\big( \Phi_\ell(T';Z) \big)
\tx{,}
\end{gather*}
where $\Phi_\ell(T';Z)$ is viewed as a function from $\HS^{(2)}$ to $V(\det^\ell)$.

Proposition~\ref{prop:vertical-values-in-fourier-coefficient-harish-chandra-modules} guarantees that the first tensor factor has a horizontal wall, and the second one has a vertical wall. Consequently, we can apply Proposition~\ref{prop:tensor-product-with-walls}. It implies that $\varpi_{k,\ell;T,T'}$ has a vertical wall in the direction of~$\rightarrow$. If it occurs in $\varpi\big( P^{(2)}_{k,\ell;T,T'} \big)$, then it contains a scalar $K$\nbd type. Hence it is a holomorphic discrete series by Proposition~\ref{prop:irreducible-unitary-walls-scalar-K-type}. This completes the proof.
\end{proof}


\renewbibmacro{in:}{}
\renewcommand{\bibfont}{\normalfont\small\raggedright}
\renewcommand{\baselinestretch}{.8}

\Needspace*{4em}
\begin{multicols}{2}
\printbibliography[heading=none]
\end{multicols}



\Needspace*{3em}
\noindent
\rule{\textwidth}{0.15em}

{\noindent\small
Kathrin Bringmann\\
Mathematisches Institut, 
Universit\"at zu K\"oln, 
Weyertal 86-90, D-50931 K\"oln,
Germany\\
E-mail: \url{kbringma@math.uni-koeln.de} 
}

\vspace{2ex}

{\noindent\small
Olav K. Richter\\
Department of Mathematics,
University of North Texas,
Denton, TX 76203,
USA\\
E-mail: \url{richter@unt.edu}
}

\vspace{2ex}

{\noindent\small
Martin Westerholt-Raum\\
Chalmers tekniska högskola och G\"oteborgs Universitet,
Institutionen för Matematiska vetenskaper,
SE-412 96 Göteborg, Sweden\\
E-mail: \url{martin@raum-brothers.eu}\\
}

\end{document}
